\newcommand{\cal}{\mathcal}
\newcommand{\mf}{\mathfrak}
\newcommand{\lb}{\left\{}
\newcommand{\rb}{\right\}}
\newcommand{\la}{\left<}
\newcommand{\ra}{\right>}
\newcommand{\zfc}{\mathsf{ZFC}}
\newcommand{\ps}{\mathbb{P}}
\newcommand{\al}{\alpha}
\newcommand{\be}{\beta}
\newcommand{\ga}{\gamma}
\newcommand{\de}{\delta}
\newcommand{\ka}{\kappa}
\newcommand{\lam}{\lambda}
\newcommand{\vp}{\varphi}
\newcommand{\om}{\omega}
\newcommand{\bsl}{\setminus}
\newcommand{\lra}{\longrightarrow}
\newcommand{\res}{\upharpoonright}
\newcommand{\seq}{\subseteq}
\newcommand{\we}{\wedge}
\newcommand{\cf}{\operatorname{cf}}
\newcommand{\pcf}{\operatorname{pcf}}
\newcommand{\spec}{\operatorname{spec}}
\newcommand{\es}{\emptyset}
\newcommand{\U}{\mathcal{U}}
\theoremstyle{definition}
\newtheorem{definition}{Definition}[section]
\newtheorem{question}[definition]{Question}
\newtheorem{notation}[definition]{Notation}
\theoremstyle{plain}
\newtheorem{fact}[definition]{Fact}
\newtheorem{theorem}[definition]{Theorem}
\newtheorem{proposition}[definition]{Proposition}
\newtheorem{lemma}[definition]{Lemma}
\newtheorem{corollary}[definition]{Corollary}
\newtheorem{remark}[definition]{Remark}
\title{PCF Theory and The Tukey Spectrum}
\author{Thomas Gilton}
\begin{document}

\address{University of Pittsburgh
Department of Mathematics.
The Dietrich School of 
Arts and Sciences,
301 Thackeray Hall,
Pittsburgh, PA 15260, United States}
\email{tdg25@pitt.edu}

\date{\today}

\subjclass[2020]{Primary 03E04, 03E05}

\keywords{PCF Theory, Tukey order, cofinality, weakly compact cardinals, Mahlo cardinals}

\begin{abstract}
    In this paper, we investigate the relationship between the Tukey order and PCF theory, as applied to sets of regular cardinals. We show that it is consistent that for all sets $A$ of regular cardinals that the Tukey spectrum of $A$, denoted $\spec(A)$, is equal to the set of possible cofinalities of $A$, denoted $\pcf(A)$; this is to be read in light of the $\mathsf{ZFC}$ fact that $\pcf(A)\seq\spec(A)$ holds for all $A$. We also prove results about when regular limit cardinals must be in the Tukey spectrum or must be out of the Tukey spectrum of some $A$, and we show the relevance of these for forcings which might separate $\spec(A)$ from $\pcf(A)$. Finally, we show that the \emph{strong part} of the Tukey spectrum can be used in place of PCF-theoretic scales to lift the existence of J{\'o}nsson algebras from below a singular to hold at its successor. We close with a list of questions.
\end{abstract}

\maketitle

\section{Introduction}

The Tukey order has become a very useful tool for comparing directed, partially ordered sets. The order is sufficiently coarse that is allows us to compare many different partial orders, yet it is fine enough to preserve a number of order-theoretic properties of interest (such as calibre properties; see Proposition \ref{prop:TukeyCalibre} below). The Tukey order works by comparing partial orders in terms of what happens ``eventually", or more precisely, in terms of what happens \emph{cofinally}. 

The Tukey order arose in the study of Moore-Smith convergence in topology  (\cite{MooreSmith} and \cite{Tukey}), with \cite{Birkhoff} and \cite{Day} following shortly after. Schmidt (\cite{Schmidt}) and Isbell (\cite{IsbellCategory} and \cite{IsbellCofinal}) later studied cofinal types among the class of directed posets. Later, Todor{\v c}evi{\' c} (\cite{TodorcevicDirected}) showed that it is consistent that there are only five cofinal types of directed sets of size $\leq\aleph_1$. Todor{\v c}evi{\' c} also showed that under the $\mathsf{CH}$, there are $2^{\mf{c}}$-many such cofinal types, and in \cite{TodorcevicClassification} he extended this result to all transitive relations on $\omega_1$.

Since then, there has been a tremendous amount of research on the Tukey order in a variety of circumstances. Some results concern definable directed sets (\cite{SoleckiTodorcevicAvoiding}, \cite{SoleckiTodorcevicCofinal}). Others concern cofinal types of ultrafilters  (\cite{Milovich}, \cite{DobrinenTodorcevic}, \cite{DobrinenTodorcevicRamseyTukey1}, \cite{DobrinenTodorcevicRamseyTukey2}, \cite{RaghavanTodorcevicCofinal}, \cite{RaghavanShelah}, \cite{KuzeljevicRaghavan}). Additional research concerns the Tukey order on various sets in topological spaces (\cite{GartsideMamatelashviliCompact}, \cite{GartsideMamatelashviliOmega1}, \cite{GartsideMamatelashviliTukeyPCF}). Yet another batch of results studies the number of cofinal types of partial orders of various sizes below $\aleph_\om$ (\cite{KuzeljevicTodorcevic}, \cite{ShalevCofinal}). See also \cite{FremlinTukeyMeasure} and \cite{MooreSolecki}.

Cofinal structure has been studied by set theorists coming from a different angle.  Especially important for us is Shelah's theory of possible cofinalities, or PCF theory for short. The main objects of study in PCF theory are reduced products of sets of regular cardinals, modulo an ideal. Shelah has developed the theory in a series of papers which culminated in the book \cite{ShelahCardinalBook}. PCF theory has had dramatic implications for our understanding of cardinal arithmetic (see \cite{AbrahamMagidorHandbook}), as well as plenty of applications both inside and outside of set theory, such as \cite{KojmanShelahDowker}. See \cite{ShelahSkeptics} for a discussion of further applications.

These two ways of studying cofinal structure are related (and we will discuss this more later): given a set $A$ of regular cardinals, we consider the Tukey spectrum of $A$, which consists of all regular cardinals which are Tukey below $(\prod A,<)$ (i.e., $\prod A$ with the pointwise domination ordering). We denote this by $\spec(A)$. It follows quickly from the definitions (which we give later) that for any set $A$ of regular cardinals, $\pcf(A)\seq\spec(A)$. In this paper, we are concerned with the following general question: 

\begin{question}\label{q:theQ}
Does $\zfc$ prove that for any set $A$ of regular cardinals, $\pcf(A)=\spec(A)$?
\end{question}

To our knowledge, the only result, so far, which addresses this question is due to Gartside and Mamatelashvili (\cite{GartsideMamatelashviliTukeyPCF}) who have a proof showing that if $A$ is any \emph{progressive} set of regular cardinals, then $\pcf(A)=\spec(A)$ (``progressive" is a common assumption when doing PCF theory). However, there is a gap in their proof. We address this gap later, observing that their argument rather shows that if $A$ is progressive, then $\spec(A)\seq\pcf(A)\cup\lim(\pcf(A))$.\footnote{Since $\spec(A)$ consists, by definition, of regular cardinals, if $A$ is progressive and $\spec(A)\neq\pcf(A)$, then $\pcf(A)$ has a regular limit point.} Additional assumptions on $\pcf(A)$ then guarantee equality. However, the status of Question \ref{q:theQ} when $A$ is not progressive is far from clear. 

In this work, we prove various results related to Question \ref{q:theQ}. After a review of the basics of the Tukey order and PCF theory in Section \ref{sec:review}, we turn in Section \ref{sec:SpecBad?} to the question of how much bigger $\spec(A)$ can be than $\pcf(A)$. We review the theorem from \cite{GartsideMamatelashviliTukeyPCF} and address the gap in their proof. Then we turn to showing that Question \ref{q:theQ} has a consistent positive answer. We also discuss circumstances under which, for all $A$, $\spec(A)$ is no worse than $\pcf(A)\cup\lim(A)$. In Section \ref{sec:SpecSmallLCs} we address the role that small large cardinals (Mahlo and weakly compact) have in excluding a regular limit $\ka$ from $\spec(A)$ (where $A\seq\ka$) or for ensuring that $\ka\in\spec(A)$. The upshot of these results is that they may reduce the options for showing that Question \ref{q:theQ} has a consistent negative answer which is witnessed by a forcing separating $\spec(A)$ and $\pcf(A)$ (if such exists). In the last main section, Section \ref{sec:strongspec}, we show that a subset of the Tukey spectrum (what we call the ``strong part" of the Tukey spectrum) is sufficiently strong to be able to ``lift" the existence of J{\'o}nsson algebras; this generalizes Shelah's celebrated result \cite{ShelahJonsson} that scales in PCF theory can lift the existence of J{\'o}nsson algebras.\\

\textbf{Acknowledgements} We would like to thank Will Brian, James Cummings, Todd Eisworth, and Paul Gartside for many helpful conversations about the Tukey order and PCF theory and for suggesting ways of extending this line of research.

\section{A quick overview of Tukey and PCF}\label{sec:review}

In this section we review the basics of the Tukey order and PCF theory which are relevant for this paper. 

\begin{remark}
Throughout the paper, all posets are assumed to be directed.
\end{remark} 

First we recall the definition of the Tukey order (see \cite{GartsideMamatelashviliCompact} for a detailed development of these ideas).

\begin{definition}
A poset $Q$ is said to be a \emph{Tukey quotient} of $P$ if there exists a function $\vp:P\lra Q$ which preserves cofinal sets. We denote this by $P\geq_TQ$.
\end{definition}

$P\geq_TQ$ is equivalent to the existence of a map $\psi:Q\lra P$ which preserves unbounded sets.

\begin{definition}
Suppose that $\ka\geq\lam\geq\mu$ are cardinals. We say that a poset $P$ has \emph{calibre} $(\ka,\lam,\mu)$ if for all $\ka$-sized $P'\seq P$ there is a $\lam$-sized $R\seq P'$ so that every $\mu$-sized $B\seq R$ is bounded in $P$.

We say simply that $P$ has \emph{calibre} $\ka$ if $P$ has calibre $(\ka,\ka,\ka)$.
\end{definition}

Note that the definition of $P$ having calibre $\ka$ simplifies to the following: every $\ka$-sized $P'\seq P$ has a $\ka$-sized subset which is bounded in $P$.

The following item connects the ideas of $\geq_T$ and calibre.

\begin{proposition}\label{prop:TukeyCalibre}
Suppose that $\ka$ is regular, that $P$ has calibre $(\ka,\lam,\mu)$, and that $P\geq_TQ$. Then $Q$ has calibre $(\ka,\lam,\mu)$ too.
\end{proposition}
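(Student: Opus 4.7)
The plan is to use the alternate characterization of $P \geq_T Q$ noted just after the definition: there is a map $\psi : Q \lra P$ whose contrapositive says that whenever $\psi[X] \seq P$ is bounded in $P$, the set $X$ is bounded in $Q$. Fix such a $\psi$, and let $Q' \seq Q$ have size $\ka$. The strategy is to transport $Q'$ across to $P$, apply the calibre of $P$ there, and pull the resulting witness back through $\psi$ to obtain an $R \seq Q'$ as required.

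First I will form $P' := \psi[Q'] \seq P$ and split into two cases according to whether $|P'| = \ka$ or $|P'| < \ka$. In the first case, since $P$ has calibre $(\ka,\lam,\mu)$, pick a $\lam$-sized $R' \seq P'$ such that every $\mu$-sized subset of $R'$ is bounded in $P$. For each $p \in R'$ choose a preimage $q_p \in Q'$ with $\psi(q_p) = p$, and let $R := \{q_p : p \in R'\}$, noting $|R| = \lam$ and that $\psi \res R$ is injective. Given any $B \seq R$ with $|B| = \mu$, the set $\psi[B] \seq R'$ has size $\mu$ and is therefore bounded in $P$, so by the unbounded-preserving property of $\psi$, the set $B$ is bounded in $Q$. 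This is the $R$ we need.

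In the second case, $|P'| < \ka$, and since $\ka$ is regular, by the pigeonhole principle there is some $p \in P'$ with $|\psi^{-1}(\{p\}) \cap Q'| = \ka$. But $\{p\}$ is trivially bounded in $P$, so $\psi^{-1}(\{p\}) \cap Q'$ is bounded in $Q$; then any $\lam$-sized $R \seq \psi^{-1}(\{p\}) \cap Q'$ has the property that \emph{every} subset of $R$ is bounded in $Q$, which is more than enough.

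I do not expect any serious obstacle here; the proof is essentially bookkeeping once one has the $\psi$ version of $\geq_T$. The one point requiring a touch of care is that $\psi$ need not be injective on $Q'$, which is exactly what forces the case split on $|P'|$ and the use of regularity of $\ka$ to handle the non-injective case. Everything else is routine.
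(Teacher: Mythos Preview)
Your proof is correct. The paper does not actually supply a proof of this proposition (it is stated as a standard fact and immediately followed by the next item), so there is nothing to compare against; your argument via the unbounded-preserving map $\psi:Q\to P$, with the case split on $|\psi[Q']|$ to handle possible non-injectivity, is the expected one and goes through cleanly.
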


The next item is particularly relevant for us.

\begin{proposition}
For a regular cardinal $\ka$, a poset $P$ fails to have calibre $\ka$ iff $P\geq_T\ka$.
\end{proposition}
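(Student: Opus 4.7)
My plan is to unpack both directions directly from the definitions, using the observation that, since $\ka$ is regular, a subset $X\seq\ka$ is unbounded in the ordinal $\ka$ if and only if $|X|=\ka$. I would work with the characterization of $P\geq_T\ka$ via a map $\psi:\ka\lra P$ that preserves unbounded sets, since this pairs most naturally with the ``$\ka$-sized subset'' language appearing in the definition of calibre $\ka$.

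For the forward direction, I would assume $P$ fails to have calibre $\ka$ and fix a witness, namely a set $P'\seq P$ with $|P'|=\ka$ such that every $\ka$-sized $R\seq P'$ is unbounded in $P$. Since $|P'|=\ka$, choose a bijection $\psi:\ka\lra P'$, and claim this $\psi$ preserves unbounded sets. Indeed, if $X\seq\ka$ is unbounded then $|X|=\ka$ by regularity, so by injectivity $|\psi[X]|=\ka$, and $\psi[X]$ is a $\ka$-sized subset of $P'$, hence unbounded in $P$ by the choice of $P'$.

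For the reverse direction, I would take $\psi:\ka\lra P$ preserving unbounded sets and set $P':=\psi[\ka]$. The key preliminary step is to note that every fibre $\psi^{-1}(p)$ has size $<\ka$: otherwise $\psi^{-1}(p)$ would be an unbounded subset of $\ka$ whose image $\{p\}$ is trivially bounded in $P$, contradicting the unbounded-preserving property. From this and the regularity of $\ka$, it follows that $|P'|=\ka$. To see $P'$ witnesses the failure of calibre $\ka$, take any $R\seq P'$ with $|R|=\ka$ and let $S:=\psi^{-1}[R]$; since $R$ is a union of $\ka$-many fibres each of size $<\ka$, we get $|S|=\ka$, so $S$ is unbounded in $\ka$, and hence $R=\psi[S]$ is unbounded in $P$.

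There is no real obstacle here; the only subtle point is the fibre-size argument in the reverse direction (which rules out the degenerate case where $\psi$ collapses $\ka$ onto a small image), and I would call that out explicitly so the reader sees exactly where the regularity of $\ka$ is used.
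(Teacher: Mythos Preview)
The paper states this proposition without proof, treating it as a standard background fact, so there is nothing to compare your argument against. Your proof is correct and is the natural one.

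One small wording issue in the reverse direction: you write ``since $R$ is a union of $\ka$-many fibres each of size $<\ka$, we get $|S|=\ka$''. You mean $S$, not $R$, is that union; and the reason $|S|=\ka$ is not the bound on fibre sizes but rather that the fibres $\psi^{-1}(p)$ for $p\in R$ are pairwise disjoint and nonempty, so $|S|\geq |R|=\ka$. (The fibre-size bound was already used earlier to get $|P'|=\ka$.) With that clarified, the argument goes through cleanly.
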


Now we define the Tukey spectrum of a poset.

\begin{definition}
The \emph{Tukey spectrum} of a poset $P$ is denoted by $\spec(P)$ and defined to be $\spec(P):=\lb\ka:P\geq_T\ka\we\ka\text{ is regular}\rb$.

When $A$ is a set of regular cardinals, we let $\spec(A)$ abbreviate $\spec\left(\prod A,<\right)$.
\end{definition}

\begin{remark}
$\spec(P)$ consists of all regular $\ka$ so that $P$ does \textbf{\emph{not}} have calibre $\ka$.
\end{remark}

It is helpful to get a better handle on what $\ka\in\spec(A)$ means in the specific case that $A$ is a set of regular cardinals. Indeed, $\ka\in\spec(A)$ iff there exists a set $\cal{F}$ of $\ka$-many functions in $\prod A$ so that every $\cal{F}_0\in[\cal{F}]^\ka$ is unbounded in $(\prod A,<)$, i.e., $A$ with the pointwise domination ordering. That is to say, there is \emph{at least one} coordinate $a\in A$ so that
$$
\lb f(a):f\in\cal{F}_0\rb
$$
is unbounded in the regular cardinal $a$. We give a name to these coordinates in the next definition.

\begin{definition}\label{def:ub}
Suppose that $A$ is a set of regular cardinals $\cal{F}\seq\prod A$. A cardinal $a\in A$ is called an \emph{unbounded coordinate} of $\cal{F}$ if $\lb f(a):f\in\cal{F}\rb$ is unbounded in $a$.

We let $\operatorname{ub}(\cal{F})$ denote the set of unbounded coordinates of $\cal{F}$.
\end{definition}

We address the question of how many coordinates are in $\operatorname{ub}(\cal{F})$ in Section \ref{sec:strongspec}.\\

The following lemma is a standard part of Tukey-ology.

\begin{lemma}\label{lemma:specproduct}
$\spec(P\times Q)=\spec(P)\cup\spec(Q)$. Hence if $A$ and $B$ are sets of regular cardinals, $\spec(A\cup B)=\spec(A)\cup\spec(B)$.
\end{lemma}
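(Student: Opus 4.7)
The plan is to prove the two inclusions of $\spec(P \times Q) = \spec(P) \cup \spec(Q)$ separately, then deduce the statement about sets of regular cardinals via the natural identification $\prod(A \cup B) \cong \prod A \times \prod(B \setminus A)$.

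For the inclusion $\spec(P) \cup \spec(Q) \seq \spec(P \times Q)$, I would observe that the coordinate projection $\pi_P : P \times Q \lra P$ sends cofinal subsets of $P \times Q$ to cofinal subsets of $P$ (given $p \in P$, fix any $q \in Q$ and find $(p',q')$ in the cofinal set above $(p,q)$, noting $\pi_P(p',q') = p' \geq p$). Hence $P \times Q \geq_T P$, and symmetrically $P \times Q \geq_T Q$. Since $\geq_T$ is transitive, any regular $\ka$ with $P \geq_T \ka$ or $Q \geq_T \ka$ also has $P \times Q \geq_T \ka$.

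The main content is the reverse inclusion; I would prove this via the calibre characterization. Suppose $\ka \in \spec(P \times Q)$, so $P \times Q$ fails to have calibre $\ka$, and fix a $\ka$-sized $R = \lb (p_\al,q_\al) : \al < \ka\rb \seq P \times Q$ no $\ka$-sized subset of which is bounded in $P \times Q$. Assume toward a contradiction that both $P$ and $Q$ have calibre $\ka$. Applying calibre $\ka$ for $P$ to the indexed family $\lb p_\al : \al < \ka \rb$ (where if the family has fewer than $\ka$ distinct elements we instead pigeonhole to a constant value), we extract a $\ka$-sized $I \seq \ka$ such that $\lb p_\al : \al \in I\rb$ is bounded in $P$ by some $p^*$. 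Now applying calibre $\ka$ for $Q$ to $\lb q_\al : \al \in I\rb$, we extract a $\ka$-sized $J \seq I$ and $q^* \in Q$ with $q_\al \leq q^*$ for $\al \in J$. Then $\lb(p_\al,q_\al) : \al \in J\rb$ is a $\ka$-sized subset of $R$ bounded by $(p^*,q^*)$, contradicting the choice of $R$. Hence $\ka \in \spec(P) \cup \spec(Q)$.

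For the second sentence, if $A$ and $B$ are sets of regular cardinals then $\prod(A \cup B)$ is literally $\prod A \times \prod(B \bsl A)$ as a partially ordered set, so the first part gives $\spec(A \cup B) = \spec(A) \cup \spec(B \bsl A)$. Monotonicity of $\spec$ under $\seq$ (immediate from the projection $\prod B \lra \prod(B\bsl A)$ being a Tukey quotient map) yields $\spec(B \bsl A) \seq \spec(B)$, and the reverse containment $\spec(A) \cup \spec(B) \seq \spec(A \cup B)$ follows from the same monotonicity. The only subtle point in the whole argument is the two-stage extraction in the calibre proof, and in particular the need to handle the case where $\lb p_\al : \al < \ka\rb$ does not have full size $\ka$; this is easily dispatched by a pigeonhole step, so I do not expect any real obstacle.
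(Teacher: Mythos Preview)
The paper does not actually prove this lemma; it simply records it as ``a standard part of Tukey-ology'' without argument. Your proposal is correct and supplies exactly the routine verification one would expect: projections witness $P\times Q\geq_T P,Q$ for one inclusion, and a two-stage thinning via the calibre characterization handles the other, with the identification $\prod(A\cup B)\cong\prod A\times\prod(B\bsl A)$ plus monotonicity yielding the second sentence. There is nothing to compare against in the paper, and no gaps in what you wrote.
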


This captures all of the basics of the Tukey order that we need. We now review some of the central results in PCF theory, beginning with the central definition (see \cite{AbrahamMagidorHandbook} for a clear and detailed exposition of these and related ideas).

\begin{definition}
Let $A$ be a set of regular cardinals. 
$$
\pcf(A):=\lb\cf\left(\prod A/D\right):D\text{ is an ultrafilter on }A\rb.
$$
\end{definition}

The following are routine facts about the pcf function.

\begin{fact}\label{fact:babyPCF}
Suppose that $A$ and $B$ are sets of regular cardinals.
\begin{enumerate}
    \item $A\seq\pcf(A)$ (by using principal ultrafilters).
    \item If $A\seq B$, then $\pcf(A)\seq\pcf(B)$ (since any ultrafilter on $A$ can be extended to one on $B$).
    \item $\pcf(A\cup B)=\pcf(A)\cup\pcf(B)$ (since any ultrafilter on $A\cup B$ contains either $A$ or $B$).
\end{enumerate}
\end{fact}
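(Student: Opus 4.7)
The proof proposal essentially expands the parenthetical hints already present in the statement; each item reduces to a standard observation about reduced products of regular cardinals modulo an ultrafilter.

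For item (1), given $a \in A$, I would use the principal ultrafilter $D_a := \{X \seq A : a \in X\}$. Modulo $D_a$, two functions $f, g \in \prod A$ are identified iff $f(a) = g(a)$, and $f <_{D_a} g$ iff $f(a) < g(a)$. So the map $f \mapsto f(a)$ induces an order-isomorphism between $\prod A / D_a$ and the regular cardinal $a$, giving $\cf(\prod A/D_a) = a$. Hence $a \in \pcf(A)$.

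For item (2), given an ultrafilter $D$ on $A$, define $D' := \{X \seq B : X \cap A \in D\}$. One checks that $D'$ is an ultrafilter on $B$ with $A \in D'$. The restriction map $\rho : \prod B \lra \prod A$, $\rho(f) = f\res A$, is onto (extend arbitrarily on $B \bsl A$), and since $A \in D'$, it satisfies $f <_{D'} g$ iff $\rho(f) <_D \rho(g)$. Thus $\rho$ induces an order-isomorphism between $\prod B/D'$ and $\prod A/D$, so the two reduced products have the same cofinality, and any $\lam \in \pcf(A)$ also lies in $\pcf(B)$.

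For item (3), the inclusion $\pcf(A) \cup \pcf(B) \seq \pcf(A \cup B)$ is immediate from item (2). For the reverse, let $U$ be an ultrafilter on $A \cup B$. By the ultrafilter property, either $A \in U$ or $B \in U$; by symmetry, assume $A \in U$. Then $U_A := U \cap \cal{P}(A)$ is an ultrafilter on $A$, and applying the argument of item (2) in reverse shows that $\prod(A \cup B)/U$ and $\prod A/U_A$ have the same cofinality. Hence $\cf(\prod(A\cup B)/U) \in \pcf(A)$.

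None of the three steps presents a genuine obstacle — the only subtlety is keeping the ultrafilter bookkeeping straight in item (2), making sure that the restriction map is well-defined on equivalence classes and that the ordering is preserved precisely because $A \in D'$.
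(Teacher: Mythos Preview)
Your proposal is correct and is precisely an expansion of the parenthetical hints the paper gives; the paper treats this as a routine fact and supplies no further proof beyond those hints, so there is nothing to compare.
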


The next lemma follows almost immediately from the definitions:

\begin{lemma}\label{lemma:onedirection}
For any set $A$ of regular cardinals, $\pcf(A)\seq\spec(A)$.
\end{lemma}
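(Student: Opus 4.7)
The plan is to unfold both definitions and show that a cofinal $<_D$-increasing sequence in $\prod A/D$ works as a witness family for $\spec(A)$. Fix $\ka\in\pcf(A)$ and an ultrafilter $D$ on $A$ with $\cf(\prod A/D)=\ka$. Since $\ka$ is regular (cofinalities are), I would extract a strictly $<_D$-increasing sequence $\la f_\al:\al<\ka\ra$ in $\prod A$ that is cofinal in the partial order $(\prod A/D,<_D)$. The claim is then that $\cal{F}:=\lb f_\al:\al<\ka\rb$ witnesses that $(\prod A,<)$ fails to have calibre $\ka$, which by the proposition just above is equivalent to $\ka\in\spec(A)$.

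To verify this, I would take an arbitrary $\cal{F}_0\in[\cal{F}]^\ka$, write $I:=\lb\al<\ka:f_\al\in\cal{F}_0\rb$, note $|I|=\ka$, and argue by contradiction. Suppose $g\in\prod A$ bounds $\cal{F}_0$ in the pointwise ordering, so $f_\al\leq g$ pointwise, hence $f_\al\leq_D g$, for every $\al\in I$. By cofinality of the sequence in $\prod A/D$, pick $\be<\ka$ with $g<_D f_\be$, and then by $|I|=\ka>\be$ pick $\al\in I$ with $\al>\be$. Then
$$
f_\be<_D f_\al\leq_D g<_D f_\be,
$$
which is a contradiction (the intersection of the three $D$-large sets witnessing these relations is nonempty). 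Hence no $\ka$-sized subfamily of $\cal{F}$ is bounded in $(\prod A,<)$, so $\cal{F}$ witnesses $(\prod A,<)\geq_T\ka$, i.e., $\ka\in\spec(A)$.

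There is no real obstacle here; the only care needed is in extracting a $<_D$-increasing cofinal sequence from a cofinal set of cardinality $\ka$, which is routine using regularity of $\ka$ and the fact that $(\prod A/D,<_D)$ is a partial order directed enough to interleave finitely (in fact, any boundedly many) elements.
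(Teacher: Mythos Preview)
Your argument is correct and is exactly the unfolding of definitions the paper has in mind; the paper itself does not write out a proof, remarking only that the lemma ``follows almost immediately from the definitions.'' One small simplification you could make: since $D$ is an ultrafilter, $\prod A/D$ is in fact linearly ordered, so the extraction of a strictly $<_D$-increasing cofinal $\ka$-sequence is immediate and needs no directedness argument.
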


A very useful assumption when doing PCF theory is the following:

\begin{definition}
A set $A$ of regular cardinals is \emph{progressive} if $|A|<\min(A)$.
\end{definition}

We next define certain ideals which are naturally associated to the cardinals in $\pcf(A)$. $A$ being progressive plays an important role in the development of the ideas that follow.

\begin{definition}
Let $A$ be a set of regular cardinals and $\lam$ a cardinal (singular or regular). Define the ideal $J_{<\lam}[A]$ to consist of all $B\seq A$ so that for any ultrafilter $D$ on $A$ with $B\in D$, $\cf(\prod A/D)<\lam$.
\end{definition}

If the set $A$ is clear from context, we write $J_{<\lam}$ instead of $J_{<\lam}[A]$. A crucial fact about the ideal $J_{<\lam}$ is the following:

\begin{proposition}\label{prop:directed}
Suppose that $A$ is progressive. Then for any cardinal $\lam$, $\prod A/J_{<\lam}$ is $<\lam$-directed (that is, any set of fewer than $\lam$-many functions in $\prod A$ has an upper bound mod $J_{<\lam}$).
\end{proposition}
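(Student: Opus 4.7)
The plan is to prove the statement by induction on $\lam$, using progressiveness at the base and running the induction at successor stages simultaneously with a PCF-theoretic structure theorem for the ideals $J_{<\lam}$.

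The base case is $\lam \leq \min(A)$: given $X \seq \prod A$ with $|X| < \lam$, the pointwise supremum $g(a) := \sup_{f \in X} f(a)$ lies in $\prod A$, since each $a \in A$ is a regular cardinal with $a \geq \min(A) \geq \lam > |X|$. This $g$ is already a pointwise upper bound, hence in particular an upper bound mod $J_{<\lam}$. The limit case is routine: since $J_{<\lam} = \bigcup_{\mu < \lam} J_{<\mu}$, any $X$ with $|X| < \lam$ satisfies $|X| < \mu$ for some $\mu < \lam$, and the inductive hypothesis then provides an upper bound mod $J_{<\mu} \seq J_{<\lam}$.

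The successor case $\lam = \mu^+$ is the heart of the argument. I would carry it along with the \emph{generator theorem}: for $\mu \in \pcf(A)$ there is a set $B_\mu \seq A$ so that $J_{<\mu^+} = J_{<\mu} + \la B_\mu \ra$, and $\prod B_\mu/(J_{<\mu}\res B_\mu)$ admits a cofinal $<$-increasing scale $\la g_\al : \al < \mu \ra$ of length $\mu$ (when $\mu\notin\pcf(A)$ one has $J_{<\mu^+} = J_{<\mu}$ and the result reduces to the inductive hypothesis). Given $X \seq \prod A$ with $|X| \leq \mu$, the inductive hypothesis for $<\mu$-directedness of $\prod A/J_{<\mu}$ produces a function $g_1$ bounding $X$ mod $J_{<\mu}$, which automatically handles the restriction off of $B_\mu$; on $B_\mu$ the scale is used to dominate each $f \in X$ by some $g_{\al(f)}$ modulo $J_{<\mu}\res B_\mu$, and stitching these two bounds gives the desired upper bound mod $J_{<\mu^+}$.

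The main obstacle is this successor step. Constructing the generator $B_\mu$ together with its scale requires the existence of \emph{exact upper bounds} for increasing sequences, which itself uses the inductive hypothesis for $<\mu$-directedness of $\prod A/J_{<\mu}$; so the induction must be carefully interlocked with the generator theorem. Moreover, when $|X| = \mu$, choosing a single scale index $\al^* < \mu$ that dominates the collection $\{\al(f) : f \in X\}$ is delicate, and is handled by Shelah's argument exploiting the progressiveness of $A$ to guarantee enough room at $\mu$. This is exactly the pcf machinery developed by Shelah; a careful exposition is laid out in the Abraham--Magidor handbook chapter cited in the introduction.
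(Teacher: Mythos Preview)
The paper does not actually prove this proposition: it is stated in the background Section~\ref{sec:review} as a known PCF fact, with the reader referred to the Abraham--Magidor handbook chapter. So there is no ``paper's own proof'' to compare against; your outline is essentially the standard Shelah/Abraham--Magidor development, and in that sense it is the right approach.

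That said, there is a genuine slip in your successor step. You write that for $|X|\leq\mu$, ``the inductive hypothesis for $<\mu$-directedness of $\prod A/J_{<\mu}$ produces a function $g_1$ bounding $X$ mod $J_{<\mu}$.'' But $<\mu$-directedness only bounds sets of size \emph{strictly less than} $\mu$; it does not hand you a single $g_1$ bounding a set of size $\mu$ mod $J_{<\mu}$. You flag a delicacy later, but only on the $B_\mu$ side (choosing a single scale index $\al^*$); the off-$B_\mu$ part has exactly the same problem, and your sketch glosses over it. In the standard argument one instead enumerates $X=\{f_\be:\be<\mu\}$, uses the inductive hypothesis to build a $<_{J_{<\mu}}$-increasing sequence $\la h_\be:\be<\mu\ra$ with each $h_\be$ bounding $\{f_\ga:\ga<\be\}$, and then analyzes this $\mu$-sequence via exact upper bounds to produce the generator $B_\mu$ and the desired bound mod $J_{<\mu^+}$ simultaneously. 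Your final paragraph gestures at this interlocking, so you are aware the real work lives here; just be careful that the ``$g_1$ off $B_\mu$'' shortcut as written is not available.
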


A major theorem in PCF theory is the existence of generators, which we define now.

\begin{definition}
Let $A$ be a set of regular cardinals and $\lam\in\pcf(A)$. A \emph{generator} for $\lam$ is a set $B_\lam\seq A$ in $J_{<\lam^+}$ so that $J_{<\lam}\cup\lb B_\lam\rb$ generates $J_{<\lam^+}$. In other words, given $X\seq A$, $X\in J_{<\lam^+}$ iff $X\bsl B_\lam\in J_{<\lam}$.
\end{definition}

Thus a generating set $B_\lam$ is a maximal set in $J_{<\lam^+}$, modulo $J_{<\lam}$; they are unique modulo $J_{<\lam}$. The following is often proven using universal cofinal sequences:

\begin{proposition}\label{prop:generator}
Suppose that $A$ is progressive. Then for any $\lam\in\pcf(A)$, there is a generator for $\lam$.
\end{proposition}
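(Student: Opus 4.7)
The plan is to follow the classical approach via universal cofinal sequences and exact upper bounds, which is the standard route for this result in the PCF literature. First, I would construct a $<_{J_{<\lam}}$-increasing sequence $\la f_\al:\al<\lam\ra\seq\prod A$ that is \emph{universal}, in the sense that for every ultrafilter $D$ on $A$ disjoint from $J_{<\lam}$ with $\cf(\prod A/D)=\lam$, the sequence $\la f_\al/D:\al<\lam\ra$ is cofinal in $\prod A/D$. The construction is by recursion on $\al<\lam$: at each stage, Proposition \ref{prop:directed} (the $<\lam$-directedness of $\prod A/J_{<\lam}$), together with progressiveness, lets one pick $f_\al$ that simultaneously dominates all earlier $f_\be$ modulo $J_{<\lam}$ and absorbs a bookkept test function chosen to guarantee eventual cofinality modulo each relevant $D$. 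Progressiveness is crucial here, since it ensures $|A|<\min(A)\leq\lam$ and so enables the required interleaving inside a $<\lam$-directed poset.

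Next, I would invoke the standard existence theorem for exact upper bounds of progressive $<_{J_{<\lam}}$-increasing sequences: this yields a function $f^*:A\to\operatorname{Ord}$ with $f_\al<_{J_{<\lam}}f^*$ for all $\al<\lam$, and such that whenever $g<_{J_{<\lam}}f^*$, there is some $\al<\lam$ with $g<_{J_{<\lam}}f_\al$. I would then define
$$
B_\lam:=\lb a\in A:\cf(f^*(a))=\lam\rb
$$
and claim that $B_\lam$ is a generator for $\lam$.

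To finish, I would verify the two defining properties. For $B_\lam\in J_{<\lam^+}$: if $D$ is any ultrafilter with $B_\lam\in D$, then on $B_\lam$ each $f^*(a)$ admits a cofinal sequence of length $\lam$, and these can be pasted together into a cofinal $\lam$-indexed sequence in $\prod A/D$, giving $\cf(\prod A/D)\leq\lam<\lam^+$. For the maximality, suppose toward contradiction that $X\in J_{<\lam^+}$ but $X\bsl B_\lam\notin J_{<\lam}$; then one may extend $\lb X\bsl B_\lam\rb$ to an ultrafilter $D$ disjoint from $J_{<\lam}$. Since $X\in D$ and $X\in J_{<\lam^+}$, we get $\lam\leq\cf(\prod A/D)<\lam^+$, hence $\cf(\prod A/D)=\lam$. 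By universality, $\la f_\al/D:\al<\lam\ra$ is cofinal in $\prod A/D$. However, on $X\bsl B_\lam$ the values $f^*(a)$ all have cofinality $\neq\lam$, and this lets one define $g:A\to\operatorname{Ord}$ which agrees with $f^*$ off $X\bsl B_\lam$ and on $X\bsl B_\lam$ is chosen below $f^*(a)$ but dominating $\lb f_\al(a):\al<\lam\rb$ pointwise; this $g$ satisfies $g<_{J_{<\lam}}f^*$ yet escapes every $f_\al$ modulo $D$, contradicting exactness.

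The main obstacle is the construction of the universal sequence, since one must weave through the $<\lam$-directedness of $\prod A/J_{<\lam}$ while catching all ultrafilters of cofinality $\lam$, and then convert the result into an exact upper bound. Both steps lean heavily on progressiveness, and are exactly where the PCF-theoretic subtleties concentrate; after that, the identification and verification of $B_\lam$ are essentially bookkeeping from the exactness property.
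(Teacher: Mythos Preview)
The paper does not actually prove this proposition; it is stated as background from PCF theory, with the remark immediately preceding it that ``The following is often proven using universal cofinal sequences.'' Your proposal follows exactly this standard route---universal cofinal sequence, exact upper bound, then $B_\lam$ read off from the cofinalities of the values of the exact bound---so it is fully in line with what the paper indicates.

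One technical wrinkle worth flagging in your maximality argument: you write that on $X\bsl B_\lam$ the values $f^*(a)$ have cofinality $\neq\lam$, and then choose $g(a)<f^*(a)$ dominating $\lb f_\al(a):\al<\lam\rb$. This is fine when $\cf(f^*(a))>\lam$, but when $\cf(f^*(a))<\lam$ the set $\lb f_\al(a):\al<\lam\rb$ may well be cofinal in $f^*(a)$, and no such $g(a)$ exists. The standard repair is to first observe that $\lb a\in A:\cf(f^*(a))<\lam\rb\in J_{<\lam}$ (this uses that $\la f_\al:\al<\lam\ra$ is $<_{J_{<\lam}}$-increasing of regular length $\lam$ together with progressiveness), and then either modify $f^*$ on that null set or simply absorb it into $J_{<\lam}$ before running your contradiction. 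With that adjustment your outline is the textbook proof.
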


Generators give an instance of compactness:

\begin{proposition}\label{prop:compactness}
Suppose that $A$ is progressive and $B\seq A$. Let $\la B_\lam:\lam\in\pcf(A)\ra$ be a sequence of generators. Then there exists a finite decreasing sequence $\lam_0>\dots>\lam_n$ of elements of $\pcf(A)$ with $\lam_0=\max\operatorname{pcf}(B)$ so that
$$
B\seq\bigcup_{i\leq n}B_{\lam_i}.
$$
\end{proposition}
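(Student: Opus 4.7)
The plan is a greedy recursive construction: at each stage, remove the portion of $B$ absorbed by the generator for $\max\pcf$ of what currently remains, and argue termination via strict descent among the ordinals.

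First I would observe that $B\seq A$ inherits progressivity from $A$, since $|B|\leq|A|<\min(A)\leq\min(B)$. Hence the standard PCF machinery is available on every nonempty $C\seq B$; in particular, $\max\pcf(C)$ exists, and generators $B_\lam$ for $\lam\in\pcf(A)$ are furnished by Proposition \ref{prop:generator}.

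Next I would define the sequence recursively. Put $C_0:=B$ and $\lam_0:=\max\pcf(B)$. Having defined a strictly decreasing $\lam_0>\dots>\lam_i$ in $\pcf(A)$, set $C_{i+1}:=B\bsl\bigcup_{j\leq i}B_{\lam_j}$; if $C_{i+1}=\es$, halt, and otherwise let $\lam_{i+1}:=\max\pcf(C_{i+1})$. The heart of the argument is verifying $\lam_{i+1}<\lam_i$. Unpacking $\lam_i=\max\pcf(C_i)$: for every ultrafilter $D$ on $A$ with $C_i\in D$, $\cf(\prod A/D)\leq\lam_i$, so $C_i\in J_{<\lam_i^+}[A]$. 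The generator property then gives $C_i\bsl B_{\lam_i}\in J_{<\lam_i}[A]$, hence its subset $C_{i+1}$ lies in $J_{<\lam_i}[A]$ as well, which forces $\lam_{i+1}=\max\pcf(C_{i+1})<\lam_i$. A strictly decreasing sequence of ordinals is finite, so the construction halts at some stage $n+1$; at that point $C_{n+1}=\es$, which is exactly $B\seq\bigcup_{i\leq n}B_{\lam_i}$.

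The main obstacle, beyond routine bookkeeping, is the fiddly translation between ``$\lam_i=\max\pcf(C_i)$'' and ``$C_i\in J_{<\lam_i^+}[A]$.'' One must use that ultrafilters on $A$ concentrating on $C_i$ biject with ultrafilters on $C_i$ and that $\cf(\prod A/D)$ is computed identically from either side, so that the maximum of $\pcf(C_i)$ (witnessed by ultrafilters on $C_i$) controls membership of $C_i$ in the ideal $J_{<\lam_i^+}[A]$ (defined via ultrafilters on $A$). Once that bridge is in place, the greedy induction runs cleanly.
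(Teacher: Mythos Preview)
Your argument is correct and is precisely the standard proof of this compactness fact (see, e.g., the Abraham--Magidor chapter the paper cites as \cite{AbrahamMagidorHandbook}). The paper itself states Proposition~\ref{prop:compactness} as a background result without proof, so there is no alternative approach here to compare against; your greedy descent via $C_{i+1}=C_i\setminus B_{\lambda_i}\in J_{<\lambda_i}[A]$ is exactly what one would expect. One tiny point worth making explicit: each $\lambda_i=\max\pcf(C_i)$ lies in $\pcf(A)$ because $C_i\subseteq A$ gives $\pcf(C_i)\subseteq\pcf(A)$, so the generator $B_{\lambda_i}$ is indeed available from the given sequence.
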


Another application of generators and related ideas is the following fact:

\begin{proposition}\label{prop:maxpcf}
Suppose that $A$ is progressive. Then $\pcf(A)$ has a maximum element, and moreover,
$$
\max\operatorname{pcf}(A)=\cf\left(\prod A,<\right).
$$
\end{proposition}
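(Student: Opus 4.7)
The plan is to split the proof into two parts: first establish the existence of $\max\pcf(A)$, and then identify this maximum with $\cf(\prod A,<)$. For existence, I observe that $|\pcf(A)|\le 2^{|A|}$ because ultrafilters on $A$ parametrize $\pcf(A)$, so $\pcf(A)$ is bounded. Hence $A\in J_{<\lam}[A]$ for all sufficiently large $\lam$; let $\mu^*$ be the least such $\lam$. If $\mu^*$ were a limit cardinal, then $J_{<\mu^*}=\bigcup_{\nu<\mu^*}J_{<\nu}$ would produce some $\nu<\mu^*$ with $A\in J_{<\nu}$, contradicting minimality. Hence $\mu^*=\mu^+$ for some cardinal $\mu$. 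From $A\in J_{<\mu^+}$ one reads off that every ultrafilter $D$ on $A$ satisfies $\cf(\prod A/D)\le\mu$, while $A\notin J_{<\mu}$ produces an ultrafilter $D$ with $\cf(\prod A/D)\ge\mu$, and hence equal to $\mu$. Therefore $\mu\in\pcf(A)$ and $\mu=\max\pcf(A)$.

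Now write $\mu=\max\pcf(A)$ and $\theta=\cf(\prod A,<)$. The inequality $\mu\le\theta$ is routine: fix a cofinal family $\cal F\seq\prod A$ of size $\theta$, and pick an ultrafilter $D_0$ witnessing $\mu\in\pcf(A)$; then the image of $\cal F$ in $\prod A/D_0$ is cofinal there, so $\mu=\cf(\prod A/D_0)\le|\cal F|=\theta$.

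The substantive direction is $\theta\le\mu$, where I expect the main obstacle. The approach is to build a pointwise cofinal family in $\prod A$ of size $\mu$, by strong induction on $\max\pcf(A)$. Fix a generator $B_\mu$ for $\mu$ using Proposition \ref{prop:generator}; since $A\in J_{<\mu^+}$ and $B_\mu$ together with $J_{<\mu}$ generates $J_{<\mu^+}$, we get $A\bsl B_\mu\in J_{<\mu}$. Using the $<\mu$-directedness of $\prod A/J_{<\mu}$ from Proposition \ref{prop:directed}, together with an ultrafilter $D$ disjoint from $J_{<\mu}$ with $\cf(\prod A/D)=\mu$, one extracts a $<_{J_{<\mu}}$-increasing sequence $\la f_\al:\al<\mu\ra$ in $\prod A$ which is cofinal modulo $J_{<\mu}$ (any bound mod $J_{<\mu}$ would be a bound mod $D$, contradicting cofinality in $\prod A/D$).

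The main obstacle is lifting this from cofinality mod $J_{<\mu}$ to pointwise cofinality: a given $g\in\prod A$ may exceed each $f_\al$ on a nonempty error set $E_\al\in J_{<\mu}$. To handle this, I would apply Proposition \ref{prop:compactness} to cover each such error set by finitely many generators $B_{\lam_1},\dots,B_{\lam_n}$ with $\lam_i<\mu$, and invoke the inductive hypothesis on each $\prod B_{\lam_i}$ (which is again progressive, and whose $\max\pcf$ is at most $\lam_i<\mu$) to obtain cofinal families of size $<\mu$. Amalgamating the $f_\al$'s with these finitely many smaller cofinal families, coordinatewise-maxed on $B_\mu$ versus the $B_{\lam_i}$'s, yields a cofinal family in $\prod A$ of size $\mu$, giving $\theta\le\mu$ and completing the proof.
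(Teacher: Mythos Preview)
The paper does not prove this proposition: it appears in Section~\ref{sec:review} as part of the review of PCF background and is stated without proof, with a blanket pointer to \cite{AbrahamMagidorHandbook}. So there is no proof in the paper to compare your attempt against.

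Your outline is the standard one, and the hard direction $\theta\le\mu$ is handled correctly: the inductive amalgamation via generators and compactness is exactly right, and in fact the same mechanism (cover an error set in $J_{<\mu}$ by finitely many $B_{\lam_i}$, dominate on each piece using a small cofinal family, take coordinatewise maxima) is what drives the paper's proof of Theorem~\ref{thm:GM}. Two places deserve more care, though. First, in the existence argument, the identity $J_{<\mu^*}=\bigcup_{\nu<\mu^*}J_{<\nu}$ for limit $\mu^*$ is \emph{not} immediate from the definition: $A\in J_{<\mu^*}$ only says $\pcf(A)\seq\mu^*$, not that $\pcf(A)$ is bounded below $\mu^*$, and your bound $|\pcf(A)|\le 2^{|A|}$ (which, incidentally, does not follow just from counting ultrafilters) does not by itself rule out $\pcf(A)$ being cofinal in $\mu^*$. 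For progressive $A$ this continuity does hold, but it is part of the generator package rather than a triviality. Second, your parenthetical justification that the $<_{J_{<\mu}}$-increasing sequence $\la f_\al:\al<\mu\ra$ is cofinal mod $J_{<\mu}$ does not work as stated: a bound mod $J_{<\mu}$ would indeed be a bound mod $D$, but nothing prevents a $\mu$-sequence from being bounded mod $D$ just because $\cf(\prod A/D)=\mu$. What you actually need here is the standard fact that $\prod A/J_{<\mu}$ has \emph{true} cofinality $\mu$, which again comes from the generator theory rather than from directedness alone.
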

\vspace{.08in}

The last collection of background facts to review concerns weak compactness.

\begin{definition}\label{def:alphaModel}
Let $\al$ be an inaccessible cardinal. We say that a transitive set $M$ is an \emph{$\al$-model} if $M\models\mathsf{ZFC}^-$, $M$ has size $\al$, $M$ is closed under $<\al$-sequences, and $\al\in M$.
\end{definition}

Now we recall the following characterization of weak compactness (see \cite{Hauser}):

\begin{fact}
An inaccessible cardinal $\ka$ is weakly compact if and only if for any $\ka$-model $M$, there exist a $\ka$-model $N$ and an elementary embedding $j:M\to N$ with $\operatorname{crit}(j)=\ka$.
\end{fact}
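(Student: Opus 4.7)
The plan is to prove both directions of this standard characterization, leaning on the equivalence between weak compactness and the $\Pi^1_1$-indescribability / tree property characterizations, together with ultrapower constructions internal to $\kappa$-models.

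For the forward direction, assume $\kappa$ is weakly compact and fix a $\kappa$-model $M$. The goal is to produce an $M$-ultrafilter $U$ on $\kappa$ which is $M$-normal and $M$-$\kappa$-complete, and such that the ultrapower $\mathrm{Ult}(M,U)$ is well-founded. I would enumerate the relevant data inside $M$: the $\kappa$-many subsets of $\kappa$ that lie in $M$, and the $\kappa$-many regressive functions in $M$. Using weak compactness in the guise of the filter-extension property (every $\kappa$-complete filter on $\kappa$ generated by $\leq\kappa$-many sets extends to a $\kappa$-complete ultrafilter deciding any specified $\kappa$-many sets), I would build $U$ by a recursion of length $\kappa$, at each stage enforcing normality via Fodor-style diagonalization on the regressive functions in $M$, and $\kappa$-completeness via an intersection over previously chosen sets. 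Well-foundedness of $\mathrm{Ult}(M,U)$ comes from the fact that $M$ is closed under $<\kappa$-sequences together with $M$-$\kappa$-completeness of $U$: any putative descending chain in the ultrapower has representatives whose witnessing sequence lies in $M$, and the $\kappa$-complete intersection of the relevant sets is in $U$, giving a contradiction. The Mostowski collapse $N$ of $\mathrm{Ult}(M,U)$ is then transitive, satisfies $\mathsf{ZFC}^-$, has size $\kappa$, and is closed under $<\kappa$-sequences (transferred from the corresponding property of $M$), so it is a $\kappa$-model. The ultrapower map $j:M \to N$ is elementary by \L o\'s, and normality together with $M$-$\kappa$-completeness ensures $\mathrm{crit}(j)=\kappa$.

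For the reverse direction, assume the embedding property and prove $\kappa$ is weakly compact by deriving the tree property at $\kappa$. Let $T$ be any $\kappa$-tree; by re-labelling nodes I may assume $T\subseteq V_\kappa$. Choose a $\kappa$-model $M$ with $T\in M$ (such $M$ exist because $V_\kappa$ has size $\kappa$ under inaccessibility, so one can build an elementary submodel of $H_{\kappa^+}$ of size $\kappa$ closed under $<\kappa$-sequences and containing $T$, then take its transitive collapse). Apply the hypothesis to obtain $j:M \to N$ with $\mathrm{crit}(j)=\kappa$. By elementarity, $j(T)$ is a $j(\kappa)$-tree in $N$, and since $\mathrm{crit}(j)=\kappa$, we have $j(T)\cap V_\kappa = T$. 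Pick any node $x\in j(T)$ on level $\kappa$ (which exists since $j(\kappa)>\kappa$), and let $b$ be the set of $N$-predecessors of $x$ below level $\kappa$; this is a cofinal branch of $T$.

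The main obstacle is the forward direction, specifically the construction of the $M$-ultrafilter $U$ with all three required properties simultaneously ($M$-$\kappa$-completeness, $M$-normality, and enough coherence for the ultrapower to be well-founded). The subtlety is ensuring that the recursion's ``bookkeeping'' can be carried out using only what is available from weak compactness, while verifying that the resulting ultrapower's well-foundedness genuinely uses both the $<\kappa$-closure of $M$ and the completeness of $U$. Once that technical construction is in place, the rest of both directions is essentially routine transfer of structure between $M$, $N$, and $j$.
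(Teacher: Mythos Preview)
The paper does not prove this statement at all: it is recorded as a \textbf{Fact} with a reference to \cite{Hauser}, and no argument is given. So there is nothing in the paper against which to compare your proposal line by line.

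That said, your sketch is a correct outline of one of the standard proofs. A few remarks on points you flagged as potential obstacles. For well-foundedness of $\mathrm{Ult}(M,U)$: given representatives $f_0,f_1,\dots$ of a putative descending $\omega$-chain, the sequence $(f_n:n<\omega)$ lies in $M$ by $<\kappa$-closure, hence the sets $X_n=\{\alpha:f_{n+1}(\alpha)\in f_n(\alpha)\}$ and the sequence $(X_n:n<\omega)$ are in $M$; $M$-$\kappa$-completeness then gives $\bigcap_n X_n\in U$, a contradiction. For $N$ being a $\kappa$-model: $|N|\leq\kappa$ since functions in the ultrapower are elements of $M$, and $|N|\geq\kappa$ since $j$ is injective; $<\kappa$-closure of $N$ follows by representing any external $\beta$-sequence (with $\beta<\kappa$) by the function $\xi\mapsto(f_\alpha(\xi):\alpha<\beta)$, which lies in $M$ because the sequence $(f_\alpha:\alpha<\beta)$ does. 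Your reverse direction via the tree property is also standard and correct.

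One comment: the phrase ``filter-extension property'' is sometimes stated in a form that does not immediately yield an $M$-normal filter, only an $M$-$\kappa$-complete one. If you want normality directly, it is cleaner to appeal to the $\Pi^1_1$-indescribability characterization of weak compactness (encode $M$ as a subset of $\kappa$ and reflect the $\Pi^1_1$ statement ``there is no $M$-normal ultrafilter on $\kappa$''), or to build $U$ by a length-$\kappa$ recursion in which, at each successor step handling a regressive $f\in M$, you explicitly shrink to a set on which $f$ is constant. Your outline gestures at the latter; just be sure the bookkeeping interleaves normality and completeness requirements so that every regressive function in $M$ is handled.
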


In the context of the previous fact, note that $j:M\to N$ gives rise to an $M$-normal ultrafilter $\cal{U}_j:=\lb X\in M:X\seq\ka\we\ka\in j(X)\rb$ on $\cal{P}(\ka)\cap M$ (recall that an $M$-ultrafilter $\cal{U}$ is $M$-normal if for any $A\in\cal{U}$ and any regressive $f:A\to\ka$ with $f\in M$, $f$ is constant on a set in $\cal{U}$). Moreover, since  $M$ is closed under $<\ka$-sequences (by definition of a $\ka$-model), this ultrafilter is $<\ka$-closed in $V$ (but of course it only measures subsets of $\ka$ in $M$).

\section{How bad can Spec be?}\label{sec:SpecBad?}

In this section, we study various conditions which guarantee that either (a) $\spec(A)$ is no worse than $\pcf(A)$ together with (regular) limit points of $\pcf(A)$ or (b) $\spec(A)$ is no worse than $\pcf(A)$ together with regular limit points of $A$. These results, when coupled with anti large cardinal hypotheses, give a consistent positive answer to Question \ref{q:theQ}.

We begin by looking at the theorem from \cite{GartsideMamatelashviliTukeyPCF} which we mentioned in the introduction. Addressing a small gap in their argument, what their proof shows is that for any progressive $A$, $\spec(A)$ can at worst add regular limits of $\pcf(A)$. Additional assumptions on $\pcf(A)$ then give the equality of $\pcf(A)$ and $\spec(A)$. We first have some notation.


\begin{notation}\label{notation:max}
Given functions $g_0,\dots,g_n$ in some product $\prod A$ of regular cardinals, we let $\max(g_0,\dots,g_n)$ be the function $h\in\prod A$ defined by 
$$
h(a):=\max\lb g_0(a),\dots,g_n(a)\rb.
$$
\end{notation}
\vspace{.05in}

\begin{theorem}\label{thm:GM} (Almost entirely \cite{GartsideMamatelashviliTukeyPCF})
Suppose that $A$ is a progressive set of regular cardinals. Then $\spec(A)\seq\pcf(A)\cup\lim(\pcf(A))$.\footnote{The original statement of their theorem applies to $A$ which are finite unions of progressive sets and uses what we are calling Lemma \ref{lemma:specproduct} to reduce to the case of a single progressive set.}

In particular, if $\pcf(A)$ is closed under regular limits, or if $\pcf(A)$ has no regular limits (for example, if $\pcf(A)$ is itself progressive), then $\pcf(A)=\spec(A)$.
\end{theorem}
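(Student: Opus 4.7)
The plan is to argue by contradiction. Suppose that $\ka\in\spec(A)\bsl(\pcf(A)\cup\lim(\pcf(A)))$, witnessed by some $\cal{F}\seq\prod A$ of size $\ka$ each of whose $\ka$-sized subfamilies is pointwise unbounded in $\prod A$. The hypothesis that $\ka$ is neither in $\pcf(A)$ nor a limit point of it gives some $\lam<\ka$ with $\pcf(A)\cap(\lam,\ka]=\es$. First I would reduce to the case $A\seq[\min(A),\lam]$: write $A=(A\cap[\min(A),\lam])\cup(A\cap(\lam,\infty))$, and observe that since $A\seq\pcf(A)$ the gap forces $A\cap(\lam,\infty)\seq(\ka,\infty)$; every coordinate of the second piece then has cofinality $>\ka$, so any $\ka$-sized subfamily of $\prod(A\cap(\lam,\infty))$ is pointwise bounded. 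Hence $\ka\notin\spec(A\cap(\lam,\infty))$, and Lemma \ref{lemma:specproduct} forces $\ka\in\spec(A\cap[\min(A),\lam])$. Since progressivity and the hypotheses on $\pcf$ pass to this subset, I may assume $A\seq[\min(A),\lam]$ throughout.

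The gap $\pcf(A)\cap(\lam,\ka]=\es$ next forces $J_{<\ka^+}[A]=J_{<\ka}[A]=J_{<\lam^+}[A]$, since membership in each reduces to ``$\max\pcf(B)\leq\lam$'' using that no ultrafilter on $A$ produces a cofinality in $(\lam,\ka]$. Proposition \ref{prop:directed} (applied with cardinal parameter $\ka^+$) then implies that any $\ka$-many functions in $\prod A$ have a common upper bound modulo $J_{<\ka}$; fix such a bound $g\in\prod A$ for $\cal{F}$. For each $f\in\cal{F}$ the set $B_f:=\lb a\in A:f(a)>g(a)\rb$ lies in $J_{<\ka}$, so $\max\pcf(B_f)\leq\lam$, and Proposition \ref{prop:compactness} gives a finite $\Theta_f\seq\pcf(A)\cap[\min(A),\lam]$ with $B_f\seq\bigcup_{\theta\in\Theta_f}B_\theta$ for a fixed system $\la B_\theta:\theta\in\pcf(A)\ra$ of generators. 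The key quantitative input is now that $\pcf(A)\cap[\min(A),\lam]$ is a set of regular cardinals in $[\min(A),\lam]$ and so has size at most $\lam<\ka$, \emph{regardless of how large $\pcf(A)$ is in total}. Hence there are only $\lam$ possibilities for $\Theta_f$, and by the regularity of $\ka$ I pass to a $\ka$-sized $\cal{F}_0\seq\cal{F}$ on which $\Theta_f$ is constantly some $\Theta^*$, so every $f\in\cal{F}_0$ has $B_f\seq B^*:=\bigcup_{\theta\in\Theta^*}B_\theta$ and therefore $f\leq g$ pointwise on $A\bsl B^*$.

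To finish, I would work on $B^*$ itself. Fact \ref{fact:babyPCF} and the generator bound $\pcf(B_\theta)\seq[\min(A),\theta]$ give $\max\pcf(B^*)\leq\lam$, so Proposition \ref{prop:maxpcf} yields $\cf(\prod B^*,<)\leq\lam$. Fixing a cofinal $G\seq\prod B^*$ of size at most $\lam$ and pigeonholing once more on the assignment $f\mapsto g^*_f\in G$ with $f\res B^*\leq g^*_f$ (possible since $\ka>\lam\geq|G|$), I would obtain a $\ka$-sized $\cal{F}_1\seq\cal{F}_0$ and a single $g^*\in G$ with $f\res B^*\leq g^*$ for every $f\in\cal{F}_1$. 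The function $h\in\prod A$ agreeing with $g^*$ on $B^*$ and with $g$ off $B^*$ then pointwise bounds $\cal{F}_1$, contradicting the choice of $\cal{F}$. The ``in particular'' clause follows routinely: closure of $\pcf(A)$ under regular limits adds no new regulars to $\spec(A)$, while if $\pcf(A)$ is itself progressive then any $\theta\in\lim(\pcf(A))$ satisfies $\cf(\theta)\leq|\pcf(A)|<\min(\pcf(A))\leq\theta$ and is therefore singular. The main obstacle I foresee is exactly the pigeonhole step: converting a modulo-$J_{<\ka}$ bound into an honest pointwise bound requires both the cardinality estimate $|\pcf(A)\cap[\min(A),\lam]|\leq\lam$ and the finite-cover compactness of pcf-generators, and this is precisely the content that the gap in $\pcf(A)$ around $\ka$ supplies---without which an attempt to prove the stronger $\spec(A)\seq\pcf(A)$ directly would lose control of the supports $B_f$.
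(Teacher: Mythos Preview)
Your proof is correct and follows essentially the same approach as the paper: bound $\cal{F}$ modulo $J_{<\ka}$, use compactness of generators to cover each ``bad'' set $B_f$ by finitely many $B_\theta$ with $\theta<\ka$, and then pigeonhole using that there are fewer than $\ka$ options. The only differences are organizational: the paper does not bother with your preliminary reduction to $A\seq[\min(A),\lam]$, and instead of your two separate pigeonhole steps (first on $\Theta_f$, then on a cofinal family in $\prod B^*$) the paper builds a single family $\cal{X}=\lb\max(f^{\lam_0}_{\al_0},\dots,f^{\lam_n}_{\al_n},g):\vec{\lam}\in[\pcf(A)\cap\ka]^{<\om},\ \al_i<\lam_i\rb$ of size $<\ka$ upfront and pigeonholes once.
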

\begin{proof}
We begin with some set-up. Since $A$ is progressive, we may apply Proposition \ref{prop:generator} to fix a sequence $\la B_\lam:\lam\in\pcf(A)\ra$ of generators for $\pcf(A)$. For each $\lam\in\pcf(A)$, $B_\lam$ is progressive, being a subset of the progressive set $A$. Thus $\lam=\operatorname{maxpcf}(B_\lam)=\cf\left(\prod B_\lam,<\right)$, using $B_\lam\in J_{<\lam^+}\bsl J_{<\lam}$ for the first equality and applying Proposition \ref{prop:maxpcf} for the second equality. Therefore, for each $\lam\in\pcf(A)$, we may choose a sequence $\vec{f}^\lam=\la f^\lam_\al:\al<\lam\ra$ of functions in $\prod A$ so that $\la f^\lam_\al\res B_\lam:\al<\lam\ra$ is cofinal in $\left(\prod B_\lam,<\right)$.

Fix a cardinal $\ka\in\spec(A)$, suppose that $\ka$ is not a limit point of $\pcf(A)$, and we will show that $\ka\in\pcf(A)$. Because $\ka$ is not a limit point of $\pcf(A)$, we know that $|\pcf(A)\cap\ka|<\ka$; this will permit us to apply a pigeonhole argument at a crucial step later in the proof.

Since $\ka\in\spec(A)$, let $\la f_\ga:\ga<\ka\ra$ enumerate a set $\cal{F}$ of $\ka$-many functions in $\prod A$ so that every $\ka$-sized subset of $\cal{F}$ is unbounded in $\left(\prod A,<\right)$. To show that $\ka\in\pcf(A)$, it suffices to show that $J_{<\ka}\neq J_{<\ka^+}$. Since $\cal{F}$ is bounded in $\prod A/J_{<\ka^+}$ (because this reduced product is $<\ka^+$-directed, by Proposition \ref{prop:directed}), it in turn suffices to show that $\cal{F}$ is unbounded in $\prod A/J_{<\ka}$.

We work by contradiction and suppose that $\cal{F}$ is bounded in $\prod A/J_{<\ka}$. Our goal is to define a set $\cal{X}$ of fewer than $\ka$-many functions in $\prod A$ so that every $f\in\cal{F}$ is pointwise below some $g\in\cal{X}$. Supposing we can define such an $\cal{X}$, we obtain a contradiction as follows: since $\ka$ is regular, there is a single $g\in\cal{X}$ which is pointwise above $\ka$-many elements of $\cal{F}$, and this contradicts the fact that $\cal{F}$ witnesses that $\ka\in\spec(A)$.

We begin the construction of $\cal{X}$ as follows. Since $\cal{F}$ is bounded in $\prod A/J_{<\ka}$, let $g$ be a bound. For each $n\in\om$, each sequence $\vec{\lam}=\lam_0>\dots>\lam_n$ in $\pcf(A)\cap\ka$, and each sequence $\vec{\al}=\la\al_0,\dots,\al_n\ra$ with $\al_i\in\lam_i$ for all $i\leq n$, we let $h(\vec{\al},\vec{\lam})$ be the function $\max(f^{\lam_0}_{\al_0},\dots,f^{\lam_n}_{\al_n},g)$; see Notation \ref{notation:max}. We let $\cal{X}$ be the set of such $h$. Since by assumption $\pcf(A)\cap\ka$ is bounded below $\ka$, we see that $\cal{X}$ consists of fewer than $\ka$-many functions.

Now we verify that $\cal{X}$ has the desired property. Recalling that $\la f_\al:\al<\ka\ra$ enumerates $\cal{F}$, fix some $\al<\ka$. Since $f_\al<_{J_{<\ka}}g$, we know that 
$$
z_\al:=\lb a\in A:f_\al(a)\geq g(a)\rb
$$ 
is in $J_{<\ka}$. Thus $\max\pcf(z_\al)<\ka$, and so by Proposition \ref{prop:compactness}, we may find a sequence $\lam_0>\dots>\lam_n$ in $\pcf(A)\cap\ka$ so that $z_\al\seq\bigcup_{i\leq n}B_{\lam_i}$. Then, for each $i\leq n$, pick $\al_i<\lam_i$ so that
$$
f_\al\res B_{\lam_i}<f^{\lam_i}_{\al_i}\res B_{\lam_i}
$$
(i.e., this holds pointwise on $B_{\lam_i}$).
Then we see that
$$
f_\al<\max(f^{\lam_0}_{\al_0},\dots,f^{\lam_n}_{\al_n},g).
$$
Indeed, if $a\notin z_\al$, then $f_\al(a)<g(a)$. On the other hand, if $a\in z_\al$, there is some $i\leq n$ so that $a\in B_{\lam_i}$, and hence $f_\al(a)<f^{\lam_i}_{\al_i}(a)$. Since the function $\max(f^{\lam_0}_{\al_0},\dots,f^{\lam_n}_{\al_n},g)$ is in $\cal{X}$, this completes the main part of the proof.

For the ``in particular" part of the theorem, observe that if $\pcf(A)$ is progressive, then there are no regular limit $\ka$ in $\lim(\pcf(A))$. Otherwise, $|\pcf(A)|\geq\ka >\min(\pcf(A))$, contradicting that $\pcf(A)$ is progressive.
\end{proof}

In the rest of this section, we work to isolate a condition (Theorem \ref{thm:limpcf}) which is consistent with $\zfc$ and which implies that $\spec(A)\seq \pcf(A)\cup\lim(\pcf(A))$ for all $A$. In fact, it implies the stronger result that $\spec(A)\seq \pcf(A)\cup\lim(A)$ for all $A$. We have some preliminary lemmas. The first of these illustrates a ``dropping" phenomenon in the Tukey spectrum; we will use this lemma as part of a later inductive argument.

\begin{lemma}\label{lemma:specdrop}
Suppose that $A$ is a set of regular cardinals. Let $\ka\in\spec(A)\cap\sup(A)$. Then $\ka\in\spec(A\cap(\ka+1))$.
\end{lemma}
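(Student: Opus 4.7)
The plan is to take a witness for $\ka \in \spec(A)$, restrict it to the coordinates below $\ka+1$, and verify that the restriction remains a witness---now for $\ka \in \spec(A \cap (\ka+1))$. First I would fix a family $\cal{F} = \{f_\ga : \ga < \ka\} \seq \prod A$ with the property that every $\ka$-sized subfamily is unbounded in $(\prod A,<)$, and define $g_\ga := f_\ga \res (A \cap (\ka+1))$ for each $\ga < \ka$. The goal is then to show that $\{g_\ga : \ga < \ka\}$ witnesses $\ka \in \spec(A \cap (\ka+1))$.

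The key observation, which drives the whole argument, is that any unbounded coordinate of a $\ka$-indexed subfamily of $\cal{F}$ must already lie in $A \cap (\ka+1)$. Indeed, if $I \seq \ka$ has $|I| = \ka$ and $a \in A$ is such that $\{f_\ga(a) : \ga \in I\}$ is unbounded in $a$, then this set has cardinality at most $\ka$, so by regularity of $a$ we must have $a = \cf(a) \leq \ka$, i.e., $a \in A \cap (\ka+1)$.

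With this in hand, the verification is immediate. Given any $\ka$-sized $I \seq \ka$, the subfamily $\{f_\ga : \ga \in I\}$ is unbounded in $\prod A$ by choice of $\cal{F}$, and such an unbounded family must possess at least one unbounded coordinate: otherwise the function $a \mapsto \sup\{f_\ga(a) : \ga \in I\} + 1$ would lie in $\prod A$ (using that each $a$ is a limit ordinal) and pointwise dominate the subfamily. By the observation above, this unbounded coordinate $a$ lies in $A \cap (\ka+1)$, and since $g_\ga(a) = f_\ga(a)$ there, the family $\{g_\ga : \ga \in I\}$ is also unbounded at $a$; in particular it has no pointwise bound in $\prod (A \cap (\ka+1))$, as required.

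I do not foresee a serious obstacle here. The lemma really amounts to unpacking the definition of $\spec$ combined with the standard cardinality fact that a family of size $\ka$ cannot be cofinal in any regular cardinal strictly above $\ka$. The hypothesis $\ka < \sup(A)$ serves only to exclude the degenerate case $A \cap (\ka+1) = A$ (when $\ka \geq \sup(A)$), in which the conclusion simply reproduces the hypothesis.
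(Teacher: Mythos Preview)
Your proof is correct and follows essentially the same approach as the paper's. Both arguments restrict the witnessing family to coordinates in $A\cap(\ka+1)$ and use the single observation that a $\ka$-sized family cannot be cofinal in any regular $a>\ka$; the paper phrases this as ``$\cal{F}_{>\ka}$ is bounded by taking sups on each coordinate above $\ka$'' and then argues by contradiction, while you phrase it contrapositively as ``every unbounded coordinate already lies in $A\cap(\ka+1)$'' and argue directly---but the content is the same. One cosmetic point: the paper explicitly checks that the restricted family has size $\ka$, whereas in your write-up this is implicit (it follows because a $\ka$-indexed family collapsing to fewer than $\ka$ functions would yield a $\ka$-sized $I$ with $\{g_\ga:\ga\in I\}$ a singleton, hence bounded, contradicting what you proved).
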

\begin{proof} Set $\mu:=\sup(A)$; note that we are making no assumption about whether or not $\mu\in A$. Since $\ka\in\spec(A)$, let $\cal{F}$ be a set of $\ka$-many functions in $\prod A$ so that for all $\cal{F}_0\in[\cal{F}]^\ka$, $\cal{F}_0$ is unbounded in $(\prod A,<)$. Let 
$$
\cal{F}_{\leq\ka}:=\lb f\res (\ka+1):f\in\cal{F}\rb
$$ 
and 
$$
\cal{F}_{>\ka}:=\lb f\res (\ka+1,\mu]:f\in\cal{F}\rb.
$$
Note that since $\ka<\mu=\sup(A)$, $A\bsl (\ka+1)$ is a non-empty set of regular cardinals. Also, note that $\cal{F}_{>\ka}$ is bounded in $\prod (A\bsl (\ka+1),<)$, since $\cal{F}_{>\ka}$ consists of at most $\ka$-many functions, and we can take a sup of $\ka$-many elements on each coordinate in $A\bsl(\ka+1)$. 

We next argue that $\cal{F}_{\leq\ka}$ has size $\ka$ and that every $\ka$-sized subset is unbounded in $(\prod A\cap(\ka+1),<)$; this will show the desired result. First suppose for a contradiction that $\cal{F}_{\leq\ka}$ has size $<\ka$. Then there is a single $\bar{f}\in\prod A\cap(\ka+1)$ so that for $\ka$-many $g\in\cal{F}$, $g\res(\ka+1)=\bar{f}$. Let $\cal{F}_0$ be this set of $g\in\cal{F}$. But then $\cal{F}_0$ is bounded in the entire product $(\prod A,<)$, using the observation from the previous paragraph to bound the elements of $\cal{F}_0$ on coordinates in $A$ above $\ka$. A nearly identical argument shows that $\cal{F}_{\leq\ka}$ must satisfy that every $\ka$-sized subset is unbounded in $(\prod A\cap(\ka+1),<)$.
\end{proof}

The next lemma will also be used in the proof of Theorem \ref{thm:limpcf}:

\begin{lemma}\label{lemma:notMahlo}
Suppose that $\ka$ is a regular limit cardinal. Let $A\seq\ka$ be a non-stationary set of regular cardinals which is unbounded in $\ka$, and let $D$ be an ultrafilter on $A$ which extends the tail filter on $A$. Then $\cf(\prod A/D)\geq\ka^+$.
\end{lemma}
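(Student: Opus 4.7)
The plan is to establish the stronger assertion that $\prod A/D$ is $\kappa^+$-directed (which suffices, since the reduced product has no maximum: given any $h$, the pointwise successor $h+1$ lies in $\prod A$ and is $>_D h$). Concretely, given any $\kappa$-indexed family $\langle f_\alpha:\al<\ka\ra$ in $\prod A$, I would produce a single $g\in\prod A$ with $g>_D f_\al$ for every $\al<\ka$; this $g$ is then not dominated mod $D$ by any $f_\al$, so the family is not cofinal.

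First, I would use the non-stationarity of $A$: fix a club $C\seq\ka$ with $C\cap A=\es$. For $a\in A$, set $c(a):=\sup(C\cap a)$. Because $C$ is closed in $\ka$ and $a\notin C$, the ordinal $a$ cannot be a limit point of $C$, and therefore $c(a)<a$. This regressive function $c$ is the key tool: it produces, for each $a\in A$, a ``budget'' $c(a)<a$ of indices we may safely aggregate without exceeding $a$.

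Second, define
\[
g(a) \;:=\; \sup\lb f_\al(a)+1 : \al\leq c(a)\rb.
\]
Since $a$ is a regular infinite cardinal and $|c(a)+1|<a$, this is a supremum of fewer than $a$ ordinals each below $a$, hence $g(a)<a$, so $g\in\prod A$. Now fix any $\al<\ka$. Using that $C$ is unbounded in $\ka$, choose some $c\in C$ with $c\geq\al$. For every $a\in A$ with $a>c$, we have $c\in C\cap a$, so $c(a)\geq c\geq\al$, and by construction $g(a)>f_\al(a)$. The set $\lb a\in A:a>c\rb$ is a tail of $A$ and therefore belongs to $D$; consequently $g>_D f_\al$.

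The only subtlety worth checking carefully is the cardinality bookkeeping in the definition of $g(a)$, specifically that $|c(a)+1|<a$ when $a$ is an infinite regular cardinal and $c(a)<a$; once this is in hand, the rest is a tidy diagonalization. I do not expect any serious obstacle: the non-stationarity gives a regressive function with the precise ``width'' needed to absorb $\ka$-many functions into a single diagonal upper bound, and the hypothesis that $D$ extends the tail filter does all the work of translating pointwise domination on a tail into domination mod~$D$.
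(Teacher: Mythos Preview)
Your proof is correct and follows essentially the same approach as the paper: both use a club $C$ disjoint from $A$ to produce, for each $a\in A$, a bound below $a$ on how many of the $f_\al$ can be absorbed into the diagonal function $g$. Your presentation is slightly cleaner---you define the regressive function $c(a)=\sup(C\cap a)$ directly and prove the stronger statement that $\prod A/D$ is $\ka^+$-directed---whereas the paper argues by contradiction via enumerations $\la\mu_\nu\ra$ of $A$ and $\la\zeta_i\ra$ of $C$ and the observation $\mu_{\zeta_i}>\zeta_i$; but the underlying idea is identical.
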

\begin{proof}
To begin, let $\la\mu_\nu:\nu<\ka\ra$ be the increasing enumeration of $A$, and let $\la\zeta_i:i<\ka\ra$ enumerate a club $C\seq\ka$ with $C\cap A=\es$. By relabeling if necessary, we take $\zeta_0=0$.

Suppose for a contradiction that $\cf(\prod A/D)\leq\ka$; then the cofinality must equal exactly $\ka$ since $D$ extends the tail filter on $A$. Let $\vec{f}=\la f_\al:\al<\ka\ra$ be a sequence of functions in $\prod A$ which is increasing and cofinal modulo $D$. We obtain our contradiction by showing that $\vec{f}$ is bounded in $\prod A$, modulo the tail filter on $A$, and hence modulo $D$.

Given $\nu<\ka$ (corresponding to $\mu_\nu$), let $i<\ka$ so that $\nu\in[\zeta_i,\zeta_{i+1})$ (such an $i$ exists since $C$ is club and $\zeta_0=0$). The following observation, though simple, is crucial: for each $i<\ka$, $\mu_{\zeta_i}>\zeta_i$: indeed, $\mu_{\zeta_i}\geq\zeta_i$ since the enumeration of $A$ is increasing. But $\zeta_i\in C$ and $\mu_{\zeta_i}\in A\seq\ka\bsl C$, so $\mu_{\zeta_i}>\zeta_i$.  Therefore $\mu_\nu$ is also strictly above $\zeta_i$, and so
$$
\sup\lb f_\al(\mu_\nu):\al\leq\zeta_i\rb
$$
is below $\mu_\nu$. We let $g(\mu_\nu)<\mu_\nu$ be above this sup. $g$ is then a member of $\prod A$.

We finish the proof by showing that $g$ bounds each of the $f_\xi$ on a tail. To this end, fix $\al<\ka$, and let $i<\ka$ so that $\al\leq\zeta_i$. We claim that for all $\nu\geq\zeta_i$, $g(\mu_\nu)>f_\al(\mu_\nu)$. Fix $\nu\geq\zeta_i$, and let $j\geq i$ so that $\nu\in[\zeta_j,\zeta_{j+1})$. Then
$$
g(\mu_\nu)>\sup\lb f_\be(\mu_\nu):\be\leq\zeta_j\rb\geq f_\al(\mu_\nu),
$$
where the last inequality follows since $\al\leq\zeta_i\leq\zeta_j$.
\end{proof}

\begin{theorem}\label{thm:limpcf}
Let $V$ be a model of $\zfc$ in which $2^\mu=\mu^+$ for all limit cardinals $\mu$ and in which there are no Mahlo cardinals. Then $V$ satisfies that for any set $A$ of regular cardinals, $\spec(A)\seq\pcf(A)\cup\lim(A)$.

Consequently, if $2^\mu=\mu^+$ for all singular $\mu$ and if there are no regular limit cardinals, then for all $A$, $\spec(A)=\pcf(A)$.
\end{theorem}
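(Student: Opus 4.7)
The plan is to reduce $A$ to a smaller $A^*$ cofinal in some limit cardinal $\mu^* < \kappa$, then use the hypothesis on $2^{\mu^*}$ together with either classical PCF for singular cardinals or Lemma \ref{lemma:notMahlo} to place $\kappa = (\mu^*)^+$ in $\pcf(A^*) \seq \pcf(A)$. Fix $\kappa \in \spec(A) \bsl \lim(A)$. If $\kappa \in A$ we are done, so assume $\kappa \notin A$. Then $A \cap \kappa$ is bounded strictly below $\kappa$; using Lemma \ref{lemma:specdrop} (to handle $\kappa < \sup(A)$) we may replace $A$ by $A' := A \cap \kappa$, for which $\kappa \in \spec(A')$ and $\sup(A') < \kappa$. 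Since $\pcf(A') \seq \pcf(A)$ by Fact \ref{fact:babyPCF}, it suffices to show $\kappa \in \pcf(A')$.

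Iteratively strip isolated maxima from $A'$: if $\nu = \max(A')$ and $A' \bsl \{\nu\}$ is bounded strictly below $\nu$, then Lemma \ref{lemma:specproduct} gives $\spec(A') = \{\nu\} \cup \spec(A' \bsl \{\nu\})$, and since $\kappa > \nu$ we retain $\kappa \in \spec(A' \bsl \{\nu\})$. The sequence of maxima removed is a strictly decreasing sequence of cardinals, hence finite, so the recursion terminates at some $A^*$; the terminal cases are $A^* = \es$ (impossible, as $\spec(\es) = \es$) or $\mu^* := \sup(A^*)$ being a limit point of $A^*$. In the latter case $\mu^*$ is necessarily a limit cardinal, and either $\mu^* \notin A^*$ with $A^*$ cofinal in $\mu^*$, or $\mu^* \in A^*$ is inaccessible with $A^* \bsl \{\mu^*\}$ cofinal in $\mu^*$.

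Now $|\prod A^*| \leq (\mu^*)^{|A^*|} \leq (\mu^*)^{\mu^*} = 2^{\mu^*} = (\mu^*)^+$ by the hypothesis at the limit cardinal $\mu^*$, so $\kappa \in \spec(A^*)$ forces $\kappa \leq (\mu^*)^+$; combined with $\kappa > \mu^*$ we get $\kappa = (\mu^*)^+$. To place $(\mu^*)^+$ in $\pcf(A^*)$ we split on $\mu^*$. If $\mu^*$ is singular, choose a cofinal progressive $A^{**} \seq A^*$ of order type $\cf(\mu^*)$ with $\min(A^{**}) > \cf(\mu^*)$; Proposition \ref{prop:maxpcf} gives $\max\pcf(A^{**}) = \cf(\prod A^{**}, <)$, and a standard pointwise diagonalization (defining $g(a_i) = \sup\{f_\al(a_i)+1 : \al < a_{i-1}\}$ against any $\mu^*$-sized would-be cofinal family) forces this cofinality above $\mu^*$, while the upper bound $(\mu^*)^+$ gives equality. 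If $\mu^*$ is inaccessible, the ``no Mahlo'' hypothesis makes the regular cardinals below $\mu^*$ non-stationary in $\mu^*$, so the cofinal-in-$\mu^*$ portion $B \seq A^*$ is non-stationary; Lemma \ref{lemma:notMahlo} then gives $\cf(\prod B/D) \geq (\mu^*)^+$ for any ultrafilter $D$ extending the tail filter on $B$, which matches the upper bound $|\prod B| \leq (\mu^*)^+$ to force $(\mu^*)^+ \in \pcf(B) \seq \pcf(A^*)$. Either way $\kappa = (\mu^*)^+ \in \pcf(A^*) \seq \pcf(A)$.

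For the consequence, the absence of regular limit cardinals makes every limit cardinal singular, so ``$2^\mu = \mu^+$ for singular $\mu$'' entails the main hypothesis (and ``no regular limits'' is stronger than ``no Mahlo''); additionally, no limit point of $A$ is regular, so $\spec(A) \cap \lim(A) = \es$, whence $\spec(A) \seq \pcf(A)$, and the reverse inclusion is Lemma \ref{lemma:onedirection}. The main obstacle is the inaccessible subcase for $\mu^*$: the singular-cardinal diagonalization does not transfer to non-principal reduced products (it only produces strict inequalities at isolated coordinates, which are not in a non-principal $D$), so one genuinely needs the ``no Mahlo'' hypothesis and Lemma \ref{lemma:notMahlo} to secure the lower bound $(\mu^*)^+$ on $\cf(\prod B/D)$.
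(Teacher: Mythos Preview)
Your proof is correct and relies on the same core ingredients as the paper's (Lemma~\ref{lemma:specdrop}, Lemma~\ref{lemma:specproduct}, the cardinal-arithmetic bound $|\prod A^*|\leq(\mu^*)^+$, and Lemma~\ref{lemma:notMahlo}), but the organization is different. The paper argues by transfinite induction on the order type of $A$: if $A$ has a maximum, split it off via Lemma~\ref{lemma:specproduct} and apply the inductive hypothesis; if not, set $\mu=\sup(A)$ and case-split on $\ka<\mu$ (use Lemma~\ref{lemma:specdrop} and induction), $\ka=\mu$ (then $\ka\in\lim(A)$), or $\ka>\mu$ (cardinal arithmetic plus a tail-ultrafilter argument). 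You instead perform the reduction explicitly in one pass: a single use of Lemma~\ref{lemma:specdrop} to arrange $\sup(A')<\ka$, followed by finitely many max-strippings to reach an $A^*$ cofinal in a limit cardinal $\mu^*$, thereby replacing the transfinite induction by a direct construction. Your handling of the singular subcase is also slightly different: you pass to a progressive cofinal $A^{**}$ and invoke Proposition~\ref{prop:maxpcf} together with a diagonalization, whereas the paper simply takes any ultrafilter $D$ extending the tail filter and notes that $\cf(\prod A/D)\geq\mu$ plus the singularity of $\mu$ forces $\cf(\prod A/D)>\mu$. Both routes are valid; the paper's is a bit shorter, while yours makes the finiteness of the reduction explicit and avoids any appeal to an inductive hypothesis.
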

\begin{proof}
Fix such a $V$ (for example, work in $L$ up to the first $\lam$ which is Mahlo in $L$, if such a $\lam$ exists, and otherwise work in all of $L$). Let $A$ be a set of regular cardinals, and we will prove the result by induction on the order type of $A$.

We first dispense with the case when $\max(A)$ exists. Let $\lam:=\max(A)$. Then, applying Lemma \ref{lemma:specproduct} and Fact \ref{fact:babyPCF}, as well as our inductive assumption, we get
\begin{align*}
    \spec(A)&=\spec((A\cap\lam)\cup\lb\lam\rb)\\
    &=\spec(A\cap\lam)\cup\underbrace{\spec(\lb\lam\rb)}_{=\lb\lam\rb}\\
    &\seq \pcf(A\cap\lam)\cup\lim(A\cap\lam) \cup\lb\lam\rb\\
    &=\pcf(A\cap\lam)\cup\pcf(\lb\lam\rb)\cup \lim(A\cap\lam)\\
    &=\pcf(A)\cup\lim(A\cap\lam)\\
    &\seq\pcf(A)\cup\lim(A).
\end{align*}

Now we suppose that $A$ does not have a max. Let $\mu:=\sup(A)$, and fix $\ka\in\spec(A)$. We have a few cases on $\ka$.

First suppose that $\ka<\mu$. Then by Lemma \ref{lemma:specdrop}, $\ka\in\spec(A\cap(\ka+1))$. Since the order type of $\bar{A}:=A\cap(\ka+1)$ is less than the order type of $A$, we apply the induction hypothesis to conclude that
$$
\ka\in\spec(\bar{A})\seq\pcf(\bar{A})\cup\lim(\bar{A})\seq\pcf(A)\cup\lim(A).
$$

Now suppose that $\ka\geq\mu$. If $\ka=\mu$ (and in particular, $\mu$ is a regular limit cardinal), then because $A$ is unbounded in $\mu$,  $\ka=\mu\in\lim(A)$.

The final case is that $\ka>\mu$ (here $\mu$ may be either regular or singular). Since $\mu$ is a limit cardinal, our cardinal arithmetic assumption implies that $\prod A$ has size $\mu^\mu=\mu^+$. Hence no cardinal greater than $\mu^+$ is in $\spec(A)$ or in $\pcf(A)$.

To finish the proof in this final case, we will argue that $\mu^+\in\pcf(A)$. Towards this end, let $D$ be an ultrafilter on $A$ which extends the tail filter on $A$. Then $\cf(\prod A/D)\geq\mu=\sup(A)$. If $\mu$ is singular, then the regular cardinal $\cf(\prod A/D)$ is greater than $\mu$. On the other hand, if $\mu$ is regular, then $\mu$ is not a Mahlo cardinal by assumption. This in turn, with the help of Lemma \ref{lemma:notMahlo}, implies that $\cf(\prod A/D)>\mu$. Thus in either case on $\mu$, $\cf(\prod A/D)>\mu$. This cofinality must be exactly $\mu^+$, however, since $\prod A$ has size $\mu^+$.

For the ``consequently" part of the theorem, recall that $\spec(A)$ consists, by definition, of regular cardinals. Thus if there are no regular limit cardinals, $\lim(A)$ contains no regular cardinals, and this implies that $\spec(A)\seq\pcf(A)$. By Lemma \ref{lemma:onedirection}, we conclude that $\pcf(A)=\spec(A)$.
\end{proof}

Thus Question \ref{q:theQ} has a consistent positive answer.

\section{Small Large Cardinals and the Tukey Spectrum}\label{sec:SpecSmallLCs}

In this section, we prove some results showing the relationship between certain small large cardinals (Mahlo and weakly compact) and the Tukey spectrum. The first of this gives a sufficient condition for including a regular limit cardinal in the Tukey spectrum. After this, we prove Theorem \ref{thm:notspec} which gives a sufficient condition for excluding a cardinal from $\spec(A)$. After the proof of Theorem \ref{thm:notspec}, we comment on applications.

\begin{proposition}\label{prop:InSpec}
Suppose that $\ka$ is a Mahlo cardinal and that $A\seq\ka$ is any stationary set of regular cardinals. Then $\ka\in\spec(A)$.
\end{proposition}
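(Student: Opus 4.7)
The plan is to exhibit an explicit $\ka$-sized family $\cal{F}\seq\prod A$ witnessing $\ka\in\spec(A)$, i.e., so that every $\cal{F}_0\in[\cal{F}]^\ka$ is unbounded in $(\prod A,<)$. The natural candidate is a ``diagonal'' family: for each $\al<\ka$, define $f_\al\in\prod A$ by
\[
f_\al(a) := \begin{cases} \al & \text{if } \al<a,\\ 0 & \text{if } \al\geq a.\end{cases}
\]
Note that since $A$ is stationary in the regular cardinal $\ka$, it is unbounded in $\ka$, and therefore distinct $\al$'s yield distinct $f_\al$ (they disagree at any $a\in A$ above both). So $\cal{F}:=\lb f_\al:\al<\ka\rb$ has size $\ka$.

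Next, I would verify the unboundedness condition. Fix $X\in[\ka]^\ka$, and consider the subfamily $\cal{F}_0:=\lb f_\al:\al\in X\rb$. Since $\ka$ is regular and $X$ is cofinal in $\ka$, the set
\[
C := \lb\be<\ka : \sup(X\cap\be)=\be\rb
\]
of limit points of $X$ below $\ka$ is a club in $\ka$. By stationarity of $A$, pick any $a\in A\cap C$. Then $X\cap a$ is cofinal in $a$, and for every $\al\in X\cap a$ we have $f_\al(a)=\al$. Hence
\[
\sup\lb f_\al(a):\al\in X\rb \;\geq\; \sup(X\cap a)\;=\;a.
\]
Any putative bound $g\in\prod A$ would have to satisfy $g(a)<a$ (since $a$ is a regular cardinal and $g\in\prod A$), contradicting the display above. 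So $\cal{F}_0$ is unbounded in $(\prod A,<)$, and $\cal{F}$ witnesses $\ka\in\spec(A)$.

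I expect no real obstacle here; the content is picking the right family. Mahlo-ness of $\ka$ enters only through its consequences that such stationary $A$ exist and that $\ka$ is regular (so that the set $C$ of limit points of $X$ is club). The rest is the standard ``a stationary set meets every club'' fact combined with the observation that the diagonal functions $f_\al$ force the $a$-coordinate of any bound to exceed every ordinal in $X\cap a$.
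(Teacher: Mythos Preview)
Your proof is correct and is essentially identical to the paper's: you use the same diagonal family $f_\al$ and the same argument that any $\ka$-sized subfamily is unbounded at a coordinate $a\in A\cap\lim(X)$. The only cosmetic difference is that you spell out explicitly why the $f_\al$ are distinct and why $\lim(X)$ is a club, while the paper leaves these implicit.
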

\begin{proof}
We show that functions which are constant on a tail witness the result. For each $\al<\ka$, let $f_\al$ be the function in $\prod A$ which takes value $0$ on all $a\in A$ with $a\leq\al$, and which takes value $\al$ on all $a\in A$ with $\al<a$. We claim that the sequence $\la f_\al:\al<\ka\ra$ enumerates a set which witnesses that $\ka\in\spec(A)$.

Towards this end, let $X\in[\ka]^\ka$, and we will show that $\la f_\al:\al\in X\ra$ is unbounded in $(\prod A,<)$. Since $A$ is stationary, we may find some $a\in A\cap\lim(X)$. Then for all $\al\in X\cap a$, $f_\al(a)=\al$. Since $a$ is a limit point of $X$, we have that $\lb f_\al(a):\al\in X\cap a\rb$ is cofinal in $a$. This completes the proof.
\end{proof}

The next result shows that we can use weak compactness to exclude a regular limit $\ka$ from $\spec(A)$, for certain $A$.

\begin{theorem}\label{thm:notspec}
Suppose that $\ka$ is weakly compact and that $A\seq\ka$ is a non-stationary set of regular cardinals which is unbounded in $\ka$. Then $\ka\notin\spec(A)$.

Therefore, if $\ka$ is weakly compact and $A\seq\ka$ is an unbounded set of regular cardinals, $\ka\in\spec(A)$ iff $A$ is stationary.
\end{theorem}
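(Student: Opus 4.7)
The plan is to argue the first statement by contradiction: suppose $\vec f = \la f_\al : \al < \ka\ra$ witnesses $\ka \in \spec(A)$, and produce a $\ka$-sized subset of $\{f_\al : \al < \ka\}$ bounded in $(\prod A,<)$. Since $A$ is non-stationary, fix a club $C \seq \ka$ with $C \cap A = \es$, enumerated increasingly as $\la \zeta_\be : \be < \ka\ra$, and set $I_\be := A \cap [\zeta_\be, \zeta_{\be+1})$, so $A = \bigsqcup_\be I_\be$ with each $|I_\be| < \ka$. Using weak compactness, fix a $\ka$-model $M$ containing $A$, $C$, and $\vec f$, and an elementary embedding $j : M \to N$ with $\operatorname{crit}(j) = \ka$; let $\mathcal{U}_j := \lb X \in \mathcal{P}(\ka) \cap M : \ka \in j(X)\rb$ be the associated $M$-normal ultrafilter. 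Since $C$ is unbounded below $\ka$ and $j(C)$ is closed in $j(\ka)$, we have $\ka \in j(C)$, whence $\ka \notin j(A)$.

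Define $g \in \prod A$ by $g(a) := j(\vec f)(\ka)(a)$; by elementarity $g(a) < a$. For each $a \in A$, the $<\ka$-completeness of $\mathcal{U}_j$ applied to the partition of $\ka$ by $\al \mapsto f_\al(a)$ (with $\leq a < \ka$ classes) picks a unique $\gamma_a < a$ with $\lb \al < \ka : f_\al(a) = \gamma_a\rb \in \mathcal{U}_j$, and the ultrapower interpretation of $j(\vec f)(\ka)$ gives $g(a) = \gamma_a$. Consequently $X_a := \lb \al < \ka : f_\al(a) \leq g(a)\rb \in \mathcal{U}_j$ for every $a \in A$.

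The remaining task is to show $\bigcap_{a \in A} X_a$ has size $\ka$. For $|A| < \ka$ this is immediate from $<\ka$-completeness. For $|A| = \ka$, the slice approach gives $Y_\be := \bigcap_{a \in I_\be} X_a \in \mathcal{U}_j$; each $Y_\be$ is the unique $\mathcal{U}_j$-positive piece of the $M$-definable partition of $\ka$ via $\al \mapsto f_\al \restriction I_\be$, and this identification lets us recover the sequence $\la Y_\be : \be < \ka\ra$ in $M$ from the underlying partitions together with the $j$-induced choice of class. Applying $M$-normality then yields $\Delta_\be Y_\be \in \mathcal{U}_j$, giving $\ka$-many $\al$'s with $f_\al \leq g$ on the initial segment $A \cap \zeta_\al$ of $A$. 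The main obstacle will be bridging from initial-segment domination to full domination on all of $A$; this is addressed via a refinement argument exploiting the slice structure provided by the club $C$ and a further normality argument handling the tail $A \setminus \zeta_\al$. Granting this, we produce the desired $\ka$-sized bounded subset, contradicting the witness property and establishing $\ka \notin \spec(A)$.

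For the ``therefore'' part, recall that weakly compact cardinals are Mahlo. Hence Proposition \ref{prop:InSpec} gives $\ka \in \spec(A)$ whenever $A \seq \ka$ is an unbounded stationary set of regular cardinals, while the main part of the theorem gives $\ka \notin \spec(A)$ when $A$ is unbounded and non-stationary; combining these yields the biconditional.
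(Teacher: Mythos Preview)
Your setup matches the paper's: slice $A$ by a club disjoint from it, pass to a $\ka$-model $M$ containing the data, and use the derived $M$-ultrafilter to freeze values on each slice. But your execution has two genuine gaps.

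\textbf{Gap 1: the sequence $\la Y_\be:\be<\ka\ra$ need not lie in $M$.} You want to apply $M$-normality to conclude $\De_\be Y_\be\in\mathcal{U}_j$, which requires $\la Y_\be\ra\in M$. Your justification is that the partitions $\al\mapsto f_\al\res I_\be$ are in $M$ and the ``$j$-induced choice of class'' picks out $Y_\be$. But that choice is exactly $g\res I_\be$, and the full sequence of choices encodes $g=j(\vec{f})(\ka)\res A$, a $\ka$-length object defined \emph{externally} via $j$. Each $g\res I_\be$ lies in $M$ (being a short sequence of small ordinals), but there is no reason the $\ka$-sequence $\la g\res I_\be:\be<\ka\ra$, equivalently $g$ itself, should be in $M$. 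So the appeal to $M$-normality is unjustified as written.

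\textbf{Gap 2: the tail is not handled.} You explicitly flag the passage from initial-segment domination to full domination as the ``main obstacle'' and then write ``Granting this'' after gesturing at a ``further normality argument handling the tail $A\setminus\zeta_\al$.'' No such normality argument is supplied, and it is not clear what it would be: for each $\al$ the tail has size $\ka$, so a second diagonal intersection does not help.

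The paper's proof sidesteps both issues cleanly. For the first, it never invokes normality: it simply uses $<\ka$-completeness in $V$ (valid since $M$ is $<\ka$-closed) to choose, for each $j<\ka$, some $\be_j\in\bigcap_{i<j}Z_i$, where $Z_i$ is the $\mathcal{U}$-large class on the $i$-th slice. This requires nothing about $\la Z_i\ra$ being in $M$. For the second, the paper does not seek a single dominating $g$ at all. Instead it argues column-by-column: for $\nu\in[\zeta_i,\zeta_{i+1})$, all $f_{\be_j}$ with $j>i$ take the \emph{same} frozen value $\vp_i(\mu_\nu)$ on coordinate $\mu_\nu$, so the set of values in that column is $\lb f_{\be_j}(\mu_\nu):j\leq i\rb\cup\lb\vp_i(\mu_\nu)\rb$, of size at most $|i|+2$. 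The non-stationarity of $A$ gives $\mu_\nu\geq\mu_{\zeta_i}>\zeta_i\geq i$, so this is $<\mu_\nu$ and the column is bounded. Taking the columnwise sup then bounds $\lb f_{\be_j}:j<\ka\rb$ in $(\prod A,<)$.

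In short, the missing idea is the column-counting step: once the slice values are frozen for cofinally many indices, only boundedly many ``early'' functions can misbehave on any fixed column, and the club structure guarantees that ``boundedly many'' means fewer than the column's cofinality. Your ``therefore'' paragraph is correct.
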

\begin{proof}
Let $C\seq\ka$ be a club with $C\cap A=\es$. Enumerate $C$ in increasing order as $\la\zeta_i:i<\ka\ra$, where we assume $\zeta_0=0$. Also, enumerate $A$ in increasing order as $\la\mu_i:i<\ka\ra$. As in the proof of Lemma \ref{lemma:notMahlo}, we have that for each $i<\ka$, $\mu_{\zeta_i}>\zeta_i$.

Next, let $\la f_\al:\al<\ka\ra$ be an enumeration of a set $\cal{F}$ of $\ka$-many functions in $\prod A$, and we will show that $\cal{F}$ does not witness that $\ka\in\spec(A)$. Fix a $\ka$-model $M$ (see Definition \ref{def:alphaModel}) which contains $A$, $C$, and $\la f_\al:\al<\ka\ra$ as elements. By the weak compactness of $\ka$, let $\U$ be an $M$-normal ultrafilter on $\cal{P}(\ka)\cap M$.

Our strategy is to use $\cal{U}$ to successively freeze out longer and longer initial segments of many functions on the sequence $\la f_\al:\al<\ka\ra$. We will then bound their tails using the non-stationarity of $A$.

For each $i<\ka$, the product
$$
\prod_{\nu\in[\zeta_i,\zeta_{i+1})}\mu_\nu
$$
is a member of $M$, and hence a subset of $M$. Since $\ka$ is strongly inaccessible, this product has size $<\ka$. Applying the fact that $\cal{U}$ is a $\ka$-complete ultrafilter on $\cal{P}(\ka)\cap M$, we may find a function $\vp_i$ in $\prod_{\nu\in[\zeta_i,\zeta_{i+1})}\mu_\nu$
so that
$$
Z_i:=\lb\be<\ka:f_\be\res\lb\mu_\nu:\nu\in[\zeta_i,\zeta_{i+1})\rb=\vp_i\rb\in\U.
$$
Next, define an increasing sequence $\la\be_j:j<\ka\ra$ below $\ka$ so that for all $j<\ka$,
$$
\be_j\in\bigcap_{i<j}Z_i.
$$
This also uses the completeness of $\cal{U}$ to see that for each $j<\ka$, $\bigcap_{i<j}Z_i$ is in $\cal{U}$ and has size $\ka$.

As a result of this freezing out, we have the following: for a fixed $i<\ka$, and all $j>i$, $\be_j\in Z_i$. Hence, for all $\nu\in[\zeta_i,\zeta_{i+1})$,
$$
f_{\be_j}(\mu_\nu)=\vp_i(\mu_\nu).
$$

Now consider an $i<\ka$ and $\nu\in[\zeta_i,\zeta_{i+1})$. Let
$$
R(\nu):=\lb f_{\be_j}(\mu_\nu):j<\ka\rb,
$$
i.e., all values of all of the $f_{\be_j}$ on the column $\mu_\nu\in A$. By applying the argument in the previous paragraph, we conclude that $R(\nu)$ in fact equals
$$\lb f_{\be_j}(\mu_\nu):j\leq i\rb\cup \lb f_{\be_j}(\mu_\nu):j> i\rb
=\lb f_{\be_j}(\mu_\nu):j\leq i\rb\cup \lb\vp_i(\mu_\nu)\rb.
$$

Now fix an arbitrary $\nu<\ka$. Since $C$ is a club and $\zeta_0=0$, there is an $i$ so that $\nu\in [\zeta_i,\zeta_{i+1})$. We claim that $R(\nu)=\lb f_{\be_j}(\mu_\nu):j\leq i\rb\cup \lb\vp_i(\mu_\nu)\rb$ has size less than $\mu_\nu$. If $i$ is finite, then $R(\nu)$ is finite, and hence has size smaller than $\mu_\nu$ (which is, after all, an infinite cardinal). On the other hand, if $i$ is infinite, then
$$
|R(\nu)|\leq |i|\leq i\leq \zeta_i<\mu_{\zeta_i}\leq\mu_\nu.
$$

Consequently, for each $\nu<\ka$, $R(\nu)$ is bounded in $\mu_\nu$, as $\mu_\nu$ is regular. But by definition of $R(\nu)$, this means that for all $\nu<\ka$, $\lb f_{\be_j}(\mu_\nu):j<\ka\rb$ is bounded in $\mu_\nu$. Thus $\lb f_{\be_j}:j<\ka\rb$ enumerates a set of $\ka$-many functions from $\cal{F}$ which is bounded in $(\prod A,<)$. Since $\cal{F}$ was arbitrary, this shows that $\ka\notin\spec(A)$.\\

For the final statement of the theorem, note that if $A\seq\ka$ is non-stationary, then since $\ka$ is Mahlo, Proposition \ref{prop:InSpec} shows that $\ka\notin\spec(A)$.
\end{proof}

Note that the converse of the above theorem may fail, since a regular limit cardinal which is not weakly compact may also fail to be in $\spec(A)$:

\begin{corollary}\label{cor:notspec}
Suppose that $\ka$ is weakly compact and that $A\seq\ka$ is a non-stationary set of regular cardinals unbounded in $\ka$. Let $\ps:=\operatorname{Add}(\om,\ka)$, the poset to add $\ka$-many Cohen subsets of $\om$. Then $\ps$ forces that $\ka\notin\spec(A)$.
\end{corollary}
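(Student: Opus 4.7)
The plan is to reduce the claim to Theorem \ref{thm:notspec} applied in $V$, since the theorem cannot be applied directly in $V[G]$: the forcing $\ps=\operatorname{Add}(\om,\ka)$ adds $\ka$-many Cohen reals, so $2^{\aleph_0}\geq\ka$ holds in $V[G]$, and hence $\ka$ is not even strongly inaccessible there, let alone weakly compact. The key idea is to use the ccc of $\ps$ to extract, in $V$, a pointwise bound on each $\dot{f}_\al$ coming from its possible forced values.

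Suppose for contradiction that $\la\dot{f}_\al:\al<\ka\ra$ is a $\ps$-name for a family $\cal{F}$ of $\ka$-many distinct functions in $\prod A$ witnessing $\ka\in\spec(A)$ in $V[G]$. First I would reduce to the case $A\seq[\om_1,\ka)$: if $\om\in A$, then by Lemma \ref{lemma:specproduct},
$$
\spec(A)=\spec(A\bsl\lb\om\rb)\cup\spec(\lb\om\rb)=\spec(A\bsl\lb\om\rb)\cup\lb\om\rb,
$$
and since $\ka\neq\om$, it suffices to handle $A\bsl\lb\om\rb$, which remains a non-stationary unbounded set of regular cardinals in $\ka$.

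Next, working in $V$, for each $\al<\ka$ and $\mu\in A$ I would set
$$
S^\al_\mu:=\lb v<\mu:(\exists p\in\ps)\, p\Vdash\dot{f}_\al(\check{\mu})=\check{v}\rb.
$$
Picking one witnessing condition per value in $S^\al_\mu$ yields an antichain, so $|S^\al_\mu|\leq\aleph_0$ by the ccc of $\ps$. Since $\mu$ is uncountable and regular, $\be^\al(\mu):=\sup(S^\al_\mu)+1<\mu$, and $\ps$ forces $\dot{f}_\al(\check{\mu})<\be^\al(\mu)$. Thus $\be^\al\in\prod A$ is defined in $V$ for each $\al<\ka$.

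Finally, I would apply Theorem \ref{thm:notspec} in $V$ (where $\ka$ is weakly compact and $A$ is non-stationary unbounded in $\ka$) to conclude $\ka\notin\spec(A)^V$, i.e., that $\prod A$ has calibre $\ka$ in $V$. Applied to $\lb\be^\al:\al<\ka\rb$ (or to a pigeonhole-fiber of size $\ka$ if fewer than $\ka$ distinct values occur), this yields in $V$ some $X\in[\ka]^\ka$ and $\vp\in\prod A$ with $\be^\al<\vp$ pointwise for every $\al\in X$. Back in $V[G]$, this gives $f_\al(\mu)<\be^\al(\mu)<\vp(\mu)$ for $\al\in X$ and $\mu\in A$, so $\vp$ bounds $\la f_\al:\al\in X\ra$, contradicting that every $\ka$-sized subfamily of $\cal{F}$ is unbounded. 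The main step to identify is this transfer via names and ccc, which converts a putative $V[G]$-witness into one to which Theorem \ref{thm:notspec} applies in $V$.
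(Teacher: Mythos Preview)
Your proposal is correct and follows essentially the same approach as the paper: use the ccc of $\operatorname{Add}(\omega,\kappa)$ to dominate each $\dot{f}_\alpha$ by a ground-model function, then apply Theorem~\ref{thm:notspec} in $V$ to bound a $\kappa$-sized subfamily. Your explicit reduction to $A\subseteq[\omega_1,\kappa)$ is a small extra bit of care (ensuring the sup of countably many possible values stays below the coordinate) that the paper's proof leaves implicit, and your remark about pigeonholing when fewer than $\kappa$ distinct $\beta^\alpha$'s occur is likewise a harmless refinement; otherwise the arguments are the same.
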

\begin{proof}
Let $\la\dot{f}_\al:\al<\ka\ra$ be a sequence of $\ps$-names for elements of $\prod A$. We will find an $X\in[\ka]^\ka$ in $V$ so that $\ps$ forces that $\la\dot{f}_\al:\al\in X\ra$ is bounded.

Indeed, using the c.c.c. of $\ps$, for each $\al<\ka$, we may find a function $\vp_\al\in\prod A$ so that $\ps\Vdash(\forall a\in A)\,[\dot{f}_\al(a)<\vp_\al(a)]$. Namely, let $\vp_\al(a)$ be above the sup of the countably-many $\ga\in a$ so that $\ga$ is forced to be the value of $\dot{f}_\al(a)$ by some condition in $\ps$.

By the Theorem \ref{thm:notspec}, let $X\in[\ka]^\ka$ so that $\la\vp_\al:\al\in X\ra$ is bounded in the product $(\prod A,<)$, say with $h$ as a bound. Then $\ps$ forces that for each $\al\in X$, $\dot{f}_\al$ is pointwise below $h$.
\end{proof}

We conclude this section with a discussion of a promising suggestion of James Cummings about separating $\pcf(A)$ and $\spec(A)$. Given that $\pcf(A)\seq\spec(A)$ always holds, we'd like to create a forcing extension in which, for some $A$, there is a cardinal $\ka\in\spec(A)\bsl\pcf(A)$.

The strategy is to start with a cardinal $\ka$ which is at least Mahlo. Then let $A=\lb\mu^+:\mu<\ka\rb$ and attempt to force the existence of a set $\cal{F}$ of $\ka$-many functions in $\prod A$ which witnesses that $\ka\in\spec(A)$. This strategy appears promising due to the next observation.

\begin{lemma}\label{prop:notpcf} Suppose that $\ka$ is strongly inaccessible and that $A\seq\ka$ is a nonstationary set of regular cardinals unbounded in $\ka$. Then $\ka\notin\pcf(A)$.
\end{lemma}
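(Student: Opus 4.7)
The plan is to take an arbitrary ultrafilter $D$ on $A$ and argue $\cf(\prod A/D)\neq\ka$ by splitting on whether $D$ extends the tail filter on $A$. Since $A$ is unbounded in $\ka$, for each $\al<\ka$ the set $A\bsl\al$ is nonempty; either $D$ contains all such tail sets, in which case $D$ extends the tail filter on $A$, or else some bounded piece $A\cap\al$ with $\al<\ka$ lies in $D$. (Principal ultrafilters trivially fall into the second case.)

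For the tail-filter case, observe that $\ka$ strongly inaccessible implies $\ka$ is a regular limit cardinal, so the hypotheses of Lemma \ref{lemma:notMahlo} are satisfied: $A$ is a nonstationary set of regular cardinals unbounded in the regular limit $\ka$, and $D$ extends the tail filter on $A$. The lemma then gives $\cf(\prod A/D)\geq\ka^+$, so in particular $\cf(\prod A/D)\neq\ka$. In the bounded case, suppose $A\cap\al\in D$ for some $\al<\ka$. Then restriction identifies $\prod A/D$ with $\prod(A\cap\al)/D'$, where $D'$ is the restriction of $D$ to $\cal{P}(A\cap\al)$, since two functions in $\prod A$ agree mod $D$ iff their restrictions to $A\cap\al$ agree mod $D'$. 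Because $\ka$ is a strong limit and $|A\cap\al|\leq |\al|<\ka$, we get $|\prod(A\cap\al)|\leq\al^{|\al|}<\ka$, and hence $\cf(\prod A/D)<\ka$.

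Combining the two cases, no ultrafilter $D$ on $A$ yields $\cf(\prod A/D)=\ka$, so $\ka\notin\pcf(A)$. There is no real obstacle here: the proof is a direct application of Lemma \ref{lemma:notMahlo} on one side, together with the strong-limit clause of strong inaccessibility on the other. The only minor point to verify is the natural isomorphism $\prod A/D\cong\prod(A\cap\al)/D'$ when $A\cap\al\in D$, which is entirely routine.
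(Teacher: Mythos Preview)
Your proof is correct and follows essentially the same approach as the paper: split on whether $D$ extends the tail filter (invoking Lemma~\ref{lemma:notMahlo}) or concentrates on a bounded piece of $A$ (invoking the strong limit property of $\ka$). The only difference is that you supply more detail on the bounded case, spelling out the isomorphism $\prod A/D\cong\prod(A\cap\al)/D'$ and the cardinality bound, whereas the paper leaves these implicit.
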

\begin{proof} If $D$ is an ultrafilter on $A$ that concentrates on a bounded subset of $A$, then the strong inaccessibility of $\ka$ implies that $\cf(\prod A/D)<\ka$. On the other hand, if $D$ extends the tail filter on $\ka$, then by Lemma \ref{lemma:notMahlo}, $\cf(\prod A/D)>\ka$.
\end{proof}

While the above strategy is natural, problems remain. First, natural Easton-style forcings to add a witness to $\ka\in\spec(A)$ seem either to fail to add such a witness, or seem to change the Mahlo $\ka$ into a weakly, non-strongly inaccessible cardinal, i.e., they increase the continuum function below $\ka$ to take values at or above $\ka$. Thus the crucial assumption of Lemma \ref{prop:notpcf} fails. Or phrased differently, ultrafilters which concentrate on bounded subsets may give rise to reduced products with very high cofinality.

Moreover, Theorem \ref{thm:notspec} provides another obstacle: if a forcing $\ps$ places $\ka$ inside $\spec(A)\bsl\pcf(A)$ for some $A$ which is non-stationary and unbounded in $\ka$, then one of two things needs to happen. Either $\ka$ starts off as non-weakly compact (and this assumption plays a role in the argument) or $\ps$ must ensure that $\ka$ loses its weak compactness.

\section{The Strong Part of the Tukey Spectrum}\label{sec:strongspec}

In this section we introduce the idea of the strong part of the Tukey spectrum, and then we will show how this idea can be used in place of scales to lift the property of not being a J{\'o}nsson cardinal. Recall the notation $\operatorname{ub}(\cal{F})$ from Definition \ref{def:ub}.

First we make a simple observation about having infinitely-many unbounded coordinates.

\begin{lemma}
Let $A$ be a set of regular cardinals without a max, and let $\ka\in\spec(A)$ with $\ka\geq\sup(A)$. Let $\cal{F}$ be any witness that $\ka\in\spec(A)$. Then for all $\cal{F}_0\in[\cal{F}]^\ka$, $\operatorname{ub}(\cal{F}_0)$ is infinite.
\end{lemma}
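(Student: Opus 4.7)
The plan is a direct pigeonhole argument: I would assume toward contradiction that $\operatorname{ub}(\cal{F}_0)$ is finite, produce a $\ka$-sized $\cal{F}_1\seq\cal{F}_0$ that is constant on those finitely many ``bad'' coordinates, and then bound $\cal{F}_1$ pointwise on all of $A$, contradicting the choice of $\cal{F}$.

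In more detail, suppose $\operatorname{ub}(\cal{F}_0)=\lb a_1,\dots,a_n\rb$ (possibly empty). Since $A$ has no maximum and $\sup(A)\leq\ka$, each $a_i<\sup(A)\leq\ka$, so $\theta:=\max(a_1,\dots,a_n)<\ka$ (interpreting $\theta=0$ if $n=0$). Consider the map
$$
\cal{F}_0\ni f\mapsto\la f(a_1),\dots,f(a_n)\ra\in a_1\times\dots\times a_n.
$$
The codomain has cardinality at most $\theta<\ka$, and $|\cal{F}_0|=\ka$ is regular, so by pigeonhole there is a $\cal{F}_1\in[\cal{F}_0]^\ka$ on which this map takes a single value, say $\la\xi_1,\dots,\xi_n\ra$.

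Now define $h\in\prod A$ as follows. For each $i\leq n$, set $h(a_i):=\xi_i+1$, which is below $a_i$ since $a_i$ is a cardinal. For each $a\in A\bsl\lb a_1,\dots,a_n\rb$, the definition of $\operatorname{ub}(\cal{F}_0)$ gives that $\lb f(a):f\in\cal{F}_0\rb$ is bounded in the regular cardinal $a$, so we may let $h(a)<a$ be any strict upper bound for this set. Then every $f\in\cal{F}_1$ satisfies $f<h$ pointwise on $A$: on the coordinates $a_i$ this holds because $f(a_i)=\xi_i<\xi_i+1=h(a_i)$, and on the remaining coordinates it holds by construction of $h$ (using $\cal{F}_1\seq\cal{F}_0$). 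Thus $\cal{F}_1\in[\cal{F}]^\ka$ is bounded in $(\prod A,<)$, contradicting that $\cal{F}$ witnesses $\ka\in\spec(A)$.

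I do not expect any genuine obstacle here; the only subtleties are (i) making sure the finite pigeonhole set is small enough to apply regularity of $\ka$, which is exactly where $\sup(A)\leq\ka$ together with $A$ having no maximum is used, and (ii) keeping $h(a_i)<a_i$, which uses that each $a_i$ is an infinite cardinal.
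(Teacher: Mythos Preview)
Your proof is correct and follows essentially the same approach as the paper: assume $\operatorname{ub}(\cal{F}_0)$ is finite, use that a finite product of cardinals below $\ka$ has size below $\ka$, apply pigeonhole to freeze the values on those coordinates, and then bound the resulting $\ka$-sized subfamily pointwise. The paper's write-up is terser (it just says to use $\bar f$ on the frozen coordinates rather than writing out an explicit $h$), but the argument is the same.
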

\begin{proof}
Suppose otherwise, with $\cal{F}_0$ as a counterexample. Then since $\operatorname{ub}(\cal{F}_0)$ is finite and has a max below $\ka$, $\prod\operatorname{ub}(\cal{F}_0)$ has size below $\ka$. Let $\cal{F}_1\in[\cal{F}_0]^\ka$ so that the function $f\in\cal{F}_1\mapsto f\res\prod\operatorname{ub}(\cal{F}_0)$ is constant, say with value $\bar{f}$. Then we can bound all of $\cal{F}_1$ in the entire product $\prod A$ using $\bar{f}$ on the coordinates in $\operatorname{ub}(\cal{F}_0)\supseteq\operatorname{ub}(\cal{F}_1)$.
\end{proof}

Of course, $\operatorname{ub}(\cal{F})$ can very well be finite, or even a singleton, for instance, if $\ka$ is a member of $A$.

We want to isolate cases in which there are plenty of unbounded coordinates. This leads to the next definition.

\begin{definition}\label{def:strongpart}
Suppose that $A$ is a set of regular cardinals. The \emph{strong part} of the Tukey spectrum of $A$, denoted $\spec^*(A)$, consists of all regular $\lam$ satisfying the following: there is a set $\cal{F}\seq\prod A$ of size $\lam$, so that for every $\cal{F}_0\in[\cal{F}]^{\lam}$, $\operatorname{ub}(\cal{F}_0)$ is unbounded in $\sup(A)$.
\end{definition}

Thus $\lam\in\spec^*(A)$ iff there is a witness $\cal{F}$ to $\lam\in\spec(A)$ with the additional property that every $\lam$-sized subset has unboundedly-many unbounded coordinates.

Observe that if $A$ is a set of regular cardinals without a max, then $\spec^*(A)\cap\sup(A)=\es$. Indeed, if $\lam<\sup(A)$ and $\cal{F}\seq\prod A$ has size $\lam$, then we can bound $\cal{F}$ on all coordinates in $A\bsl\lam^+$.

Under cardinal arithmetic assumptions, it is easy to see that every $\lam\in\spec(A)\bsl\sup(A)$ is in $\spec^*(A)$:

\begin{lemma}
Suppose that $A$ is a set of regular cardinals with no max and that $\sup(A)$ is a strong limit cardinal (regular or singular). Then $\spec(A)\bsl\sup(A)\seq\spec^*(A)$.
\end{lemma}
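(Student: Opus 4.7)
The plan is to show that any witness $\cal{F}$ to $\lam\in\spec(A)$ is already a witness to $\lam\in\spec^*(A)$, by contradiction. Set $\mu:=\sup(A)$, fix $\lam\in\spec(A)\bsl\mu$ (so $\lam$ is regular and $\lam\geq\mu$), and let $\cal{F}\seq\prod A$ be a witness to $\lam\in\spec(A)$. Suppose for contradiction there is some $\cal{F}_0\in[\cal{F}]^\lam$ with $\operatorname{ub}(\cal{F}_0)$ bounded in $\mu$, and fix $a^*<\mu$ with $\operatorname{ub}(\cal{F}_0)\seq A\cap(a^*+1)$.

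The first step is to control the ``high" coordinates: for every $a\in A$ with $a>a^*$, the definition of $\operatorname{ub}$ gives some $\eta_a<a$ so that $f(a)\leq\eta_a$ for all $f\in\cal{F}_0$. The second step is to control the ``low" coordinates by pigeonhole. Here I use the strong limit assumption on $\mu$: since $a^*<\mu$ and $\mu$ is a strong limit, the size of $\prod (A\cap(a^*+1))$ is at most $(a^*)^{|A\cap(a^*+1)|}\leq 2^{a^*}<\mu\leq\lam$. Since $\lam$ is regular, this lets me extract $\cal{F}_1\in[\cal{F}_0]^\lam$ whose members all agree on $A\cap(a^*+1)$, with common restriction $\bar f$.

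Then the function $h\in\prod A$ defined by $h(a)=\bar f(a)+1$ for $a\leq a^*$ and $h(a)=\eta_a+1$ for $a>a^*$ pointwise strictly dominates every member of $\cal{F}_1$. Thus $\cal{F}_1$ is a $\lam$-sized subset of $\cal{F}$ bounded in $(\prod A,<)$, contradicting that $\cal{F}$ witnesses $\lam\in\spec(A)$. Hence no such $\cal{F}_0$ exists, and $\cal{F}$ actually witnesses $\lam\in\spec^*(A)$.

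The only nontrivial ingredient is the strong-limit hypothesis, which is precisely what makes the pigeonhole step go through; without it, $\prod(A\cap(a^*+1))$ could be $\geq\lam$ and the construction of $\cal{F}_1$ would fail. Everything else is routine once the two regimes (coordinates above and below $a^*$) are separated.
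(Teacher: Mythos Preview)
Your proof is correct and follows essentially the same approach as the paper: both arguments use the strong-limit hypothesis to bound the size of $\prod(A\cap(a^*+1))$ below $\lam$, pigeonhole to freeze out the low coordinates, and then observe that the remaining family must be unbounded on some coordinate above $a^*$. The only cosmetic difference is that the paper phrases the argument directly (for each $\de\in A$ it produces an unbounded coordinate above $\de$) while you phrase it by contradiction; the content is the same.
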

\begin{proof}
Fix $\lam\in\spec(A)$ at least as large as $\sup(A)$. It suffices to show that if $\cal{F}$ is any witness that $\lam\in\spec(A)$, then $\operatorname{ub}(\cal{F})$ is unbounded. Fix $\de\in A$. Then the product $\prod (A\cap(\de+1))$ has size below $\sup(A)$. Thus there is $\cal{F}_0\in[\cal{F}]^\lam$ so that the function taking $f\in\cal{F}_0$ to $f\res (A\cap(\de+1))$ is constant on $\cal{F}_0$. Since $\cal{F}$ witnesses that $\lam\in\spec(A)$, we must have that $\operatorname{ub}(\cal{F}_0)$ is non-empty. Let $\de^*$ be the least element of $\operatorname{ub}(\cal{F}_0)$, and note that $\de^*>\de$, since we froze out the values of the functions in $\cal{F}_0$ on $A\cap(\de+1)$.
\end{proof}

We'd now like to connect the strong part of the Tukey spectrum with PCF theory. First we recall a few more definitions from PCF theory, beginning with the following standard version of the notion of a scale:

\begin{definition}
Let $\mu$ be a singular cardinal, $\la\mu_i:i<\cf(\mu)\ra$ an increasing sequence of regular cardinals which is cofinal in $\mu$, and $I$ an ideal on $\lb\mu_i:i<\cf(\mu)\rb$. Let $\vec{f}=\la f_\nu:\nu<\rho\ra$ be a sequence of functions in $\prod_{i<\cf(\mu)}\mu_i$. The tuple $(\vec{\mu},\vec{f},I)$ is called a \emph{scale of length $\rho$ modulo $I$} if $\vec{f}$ is increasing and cofinal in $\prod_{i<\cf(\mu)}\mu_i$ modulo $I$.

If $I$ is just the ideal of bounded subsets of $\lb\mu_i:i<\cf(\mu)\rb$, then we simply say that $(\vec{\mu},\vec{f})$ is a \emph{scale of length $\rho$} for $\mu$.
\end{definition}

We now connect scales with the strong part of the Tukey spectrum:

\begin{lemma}
Suppose that $(\vec{\mu},\vec{f})$ is a scale of length $\rho$, where $\rho$ is a regular cardinal. Then $\rho$ is in $\spec^*(A)$, where $A=\lb\mu_i:i<\cf(\mu)\rb$.
\end{lemma}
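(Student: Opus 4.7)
The strategy is to let the scale itself witness $\rho \in \spec^*(A)$. Set $\cal{F} := \lb f_\nu : \nu < \rho\rb$; because $\vec{f}$ is strictly increasing modulo the ideal $I$ of bounded subsets of $A = \lb\mu_i : i < \cf(\mu)\rb$, the functions are pairwise distinct, so $|\cal{F}| = \rho$. What remains is to verify that for every $S \in [\rho]^\rho$, the set $\operatorname{ub}(\lb f_\nu : \nu \in S\rb)$ is unbounded in $\mu = \sup(A)$.

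To this end, fix such an $S$ and suppose toward a contradiction that there is $j_0 < \cf(\mu)$ such that for every $i \geq j_0$, the set $\lb f_\nu(\mu_i) : \nu \in S\rb$ is bounded below the regular cardinal $\mu_i$. Pick a bound $h(\mu_i) < \mu_i$ for each $i \geq j_0$ and set $h(\mu_i) = 0$ for $i < j_0$, so that $h \in \prod A$. Using cofinality of the scale, pick $\nu^* < \rho$ with $h < f_{\nu^*}$ modulo $I$. Since $\rho$ is regular and $|S| = \rho$, $S$ is unbounded in $\rho$, so we can pick $\nu \in S$ with $\nu > \nu^*$. By the increasing property of the scale, $f_{\nu^*} < f_\nu$ modulo $I$, and hence $h < f_\nu$ modulo $I$: the set of $i < \cf(\mu)$ with $f_\nu(\mu_i) \leq h(\mu_i)$ is bounded. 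Choosing any $i \geq j_0$ outside this bounded set produces $f_\nu(\mu_i) > h(\mu_i)$ for $\nu \in S$, contradicting the choice of $h$ as a bound for $\lb f_{\nu'}(\mu_i) : \nu' \in S\rb$.

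The only real subtlety is that both cofinality and increasingness hold only modulo $I$, so one must pick $i$ avoiding two exceptional sets at once (the set where $h(\mu_i) \geq f_{\nu^*}(\mu_i)$, and the set where $f_{\nu^*}(\mu_i) \geq f_\nu(\mu_i)$); but both are bounded subsets of $\cf(\mu)$, so their union is bounded as well, and any sufficiently large $i \geq j_0$ works. I do not expect any other obstacle.
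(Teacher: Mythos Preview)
Your proof is correct and follows the same approach as the paper: take $\cal{F}=\lb f_\nu:\nu<\rho\rb$, and for any $S\in[\rho]^\rho$ argue that a tail-bound $h$ on $\lb f_\nu:\nu\in S\rb$ would contradict the fact that this subsequence is still cofinal in $\prod A$ modulo the bounded ideal. The paper compresses steps (2)--(4) into the single remark that $\la f_\al:\al\in Z\ra$ is again a scale, while you spell out the contradiction by passing through an explicit $f_{\nu^*}$ above $h$ and then a larger $f_\nu$ with $\nu\in S$; the content is identical.
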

\begin{proof}
This follows since we are working modulo the ideal of bounded subsets of $A$. Indeed, let $\cal{F}=\lb f_\al:\al<\rho\rb$, and we will show that $\cal{F}$ witnesses the result. Note that if $Z\in[\rho]^\rho$, then $\la f_\al:\al\in Z\ra$ is also a scale. But then $\operatorname{ub}(\lb f_\al:\al\in Z\rb)$ must be unbounded in $\sup(A)$, as otherwise we contradict that $\la f_\al:\al\in Z\ra$ is cofinal in $\prod A$ modulo the bounded ideal.
\end{proof}

Now we examine one way in which $\spec^*(A)$ can play a traditional PCF-theoretic role. We begin with some background: a remarkable phenomenon in PCF theory is that scales of length $\mu^+$ (where $\mu$ is singular) can be used to ``lift" a property which holds at the $\mu_i$ to hold at $\mu^+$. For example, Shelah proved that the failure of being a J{\'o}nsson cardinal lifts in this way; we will discuss this in more detail in a moment. Other examples include Theorem 3.5 of \cite{CanonicalStructure1} and a theorem of Todorcevic about lifting the failure of certain square bracket partition relations (\cite{TodorcevicPartitioning}).

Here we include a very short review of the notion of a J{\'o}nsson cardinal, referring the reader to \cite{EisworthHB} for more details.

\begin{definition}\hfill
\begin{enumerate}
    \item An \emph{algebra} is a structure $\cal{A}=\la A,f_n\ra_{n<\om}$ so that each $f_n$ is a finitary function mapping $A$ to $A$.
    \item A \emph{J{\'o}nsson algebra} is an algebra without a proper subalgebra of the same cardinality.
    \item A cardinal $\lam$ is said to be a \emph{J{\'o}nsson cardinal} if there does \textbf{not} exist a J{\'o}nsson algebra of cardinality $\lam$.
\end{enumerate}
\end{definition}

J{\'o}nsson cardinals can be characterized in terms of a coloring relation.

\begin{fact}
$\lam$ is a J{\'o}nsson cardinal iff for any $F:[\lam]^{<\om}\to\lam$, there exists an $H\in[\lam]^\lam$ so that the range of $F\res [H]^{<\om}$ is a proper subset of $\lam$.
\end{fact}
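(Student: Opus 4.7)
The plan is to prove both implications by converting between algebras with countably many finitary operations and a single set-function $F:[\lam]^{<\om}\to\lam$, via a coding trick.

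For the forward direction, assume $\lam$ is J{\'o}nsson and let $F:[\lam]^{<\om}\to\lam$ be given. I would package $F$ into an algebra on $\lam$: for each $n<\om$ define $F_n:\lam^n\to\lam$ by $F_n(x_0,\dots,x_{n-1}):=F(\lb x_0,\dots,x_{n-1}\rb)$ when the $x_i$ are pairwise distinct, and $0$ otherwise. By the J{\'o}nsson property, the algebra $\la\lam,F_n\ra_{n<\om}$ has a proper subalgebra $H\subsetneq\lam$ of cardinality $\lam$. Being closed under each $F_n$, $H$ satisfies $F[[H]^{<\om}]\seq H$; any $\be\in\lam\bsl H$ then witnesses that the range of $F\res[H]^{<\om}$ is a proper subset of $\lam$.

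For the reverse direction, assume the coloring property and let $\cal{A}=\la\lam,g_n\ra_{n<\om}$ be any algebra of cardinality $\lam$ (identify its universe with $\lam$). I would enumerate all terms in the signature of $\cal{A}$ as $\tau_0,\tau_1,\dots$, with $\tau_i$ of arity $k_i$, and pick integers $n_i\geq k_i$ with $i\mapsto n_i$ injective. Define $F$ on sets of size $n_i$ by $F(\lb x_0<\dots<x_{n_i-1}\rb):=\tau_i(x_0,\dots,x_{k_i-1})$, and arbitrarily on sizes not of the form $n_i$. Applying the hypothesis gives $H\in[\lam]^\lam$ with $F[[H]^{<\om}]\subsetneq\lam$; fix $\be\in\lam\bsl F[[H]^{<\om}]$. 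The key point is that any $H$ of size $\lam$ inside $\lam$ is automatically cofinal in $\lam$ (else $|H|\leq|\sup(H)|<\lam$), so for each tuple $y_1<\dots<y_{k_i}$ from $H$ one can choose $n_i-k_i$ further elements of $H$ above $y_{k_i}$ and form an $s\in[H]^{n_i}$ with $F(s)=\tau_i(y_1,\dots,y_{k_i})$. Hence $F[[H]^{<\om}]$ contains every term value over $H$, i.e., the subalgebra $\operatorname{cl}_{\cal{A}}(H)$ generated by $H$ in $\cal{A}$. Since $|\operatorname{cl}_{\cal{A}}(H)|=|H|+\aleph_0=\lam$ and $\be\notin\operatorname{cl}_{\cal{A}}(H)$, this is a proper subalgebra of $\cal{A}$ of cardinality $\lam$, so $\cal{A}$ is not J{\'o}nsson.

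The only thing needing real care is the reverse direction's coding: one must check that the single function $F$ really does expose every term value of $\cal{A}$ computed from a cofinal $H$, and this is exactly where the cofinality-of-$H$ observation earns its keep (it lets us pad any $k_i$-tuple out to an $n_i$-set while staying inside $H$). Everything else -- enumerating all terms in a countable signature, and choosing distinct $n_i\geq k_i$ -- is routine bookkeeping.
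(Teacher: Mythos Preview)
The paper does not actually supply a proof of this statement: it is recorded as a \emph{Fact}, i.e., a standard characterization quoted without argument, and the paper immediately moves on to introducing the notation $[\lam]\to[\lam]^{<\om}_\lam$. So there is nothing in the paper to compare your proposal against.

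That said, your argument is correct and is the standard one. A couple of small points worth making explicit. In the forward direction, you should note that the nullary case $n=0$ gives the constant $F(\es)$, so any subalgebra automatically contains $F(\es)$; this is harmless but ensures $F(\es)\in H$ and hence the containment $F[[H]^{<\om}]\seq H$ really covers all of $[H]^{<\om}$. In the reverse direction, the reason enumerating \emph{terms} (rather than just the basic operations $g_n$) is the right move is that it absorbs both repetitions and reorderings of arguments: any element of $\operatorname{cl}_{\cal{A}}(H)$ can be written as $\tau_i(h_1,\dots,h_{k_i})$ with $h_1<\dots<h_{k_i}$ distinct elements of $H$, after which your padding-with-larger-elements trick applies. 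Your cofinality observation is exactly what makes the padding possible, and it is the one genuinely nontrivial point in the argument.
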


We use $[\lam]\to[\lam]^{<\om}_\lam$ to denote the coloring property from the previous fact. We can also characterize this in terms of elementary submodels. The next item is almost exactly Lemma 5.6 from \cite{EisworthHB}; we have added a parameter to the statement, which does not change the proof. In the statement of the lemma, $<_\chi$ denotes a wellorder of $H(\chi)$.

\begin{lemma}\label{lemma:notJonsson}
The following two statements are equivalent:
\begin{enumerate}
    \item $\lam$ is a J{\'o}nsson cardinal.
    \item For every sufficiently large regular $\chi>\lam$, every cardinal $\ka$ so that $\ka^+<\lam$, and every parameter $P\in H(\chi)$, there is an $M\prec \la H(\chi),\in,<_\chi,P\ra$ so that
    \begin{enumerate}
        \item $\lb\lam,\ka\rb\in M$;
        \item $|M\cap\lam|=\lam$;
        \item $\lam\not\subseteq M$; and
        \item $\ka+1\seq M$.
    \end{enumerate}
\end{enumerate}
\end{lemma}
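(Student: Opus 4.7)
The plan is to prove the equivalence by translating between the algebraic and model-theoretic formulations of being J{\'o}nsson, following the standard line of argument (compare Lemma 5.6 of the Eisworth handbook chapter). The direction $(2)\Rightarrow(1)$ is a direct pull-back to a proper subalgebra, while $(1)\Rightarrow(2)$ uses a J{\'o}nsson-style coloring argument together with a Skolem-hull construction to build $M$.

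For $(2)\Rightarrow(1)$, I would take an arbitrary algebra $\cal{A}=\la\lam,f_n\ra_{n<\om}$, let $P$ code $\cal{A}$, pick $\chi$ large enough that $\cal{A}\in H(\chi)$, and take $\ka=\aleph_0$. Applying $(2)$ furnishes $M\prec\la H(\chi),\in,<_\chi,P\ra$ satisfying (a)--(d). Setting $H:=M\cap\lam$, elementarity (with $P\in M$ coding the $f_n$) gives that $H$ is closed under each $f_n$; (b) gives $|H|=\lam$, and (c) gives $H\subsetneq\lam$. Thus $\cal{A}$ is not a J{\'o}nsson algebra, and since $\cal{A}$ was arbitrary, $\lam$ is J{\'o}nsson.

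For $(1)\Rightarrow(2)$, fix $\chi,\ka,P$. I take a Skolem hull $N\prec\la H(\chi),\in,<_\chi,P,\ka,\lam\ra$ of size $\lam$ with $\lam\seq N$. For each Skolem term $\tau$ of this expanded language and each tuple $\vec{\be}\in(\ka+1)^{<\om}$, the map $\vec{a}\mapsto\tau(\vec{a},\vec{\be})$ restricted to ordinal-valued outputs gives a coloring $\lam^{<\om}\to\lam$. There are $\aleph_0\cdot(\ka+1)^{<\om}=\ka<\lam$ many such schemes, so I would code them into a single coloring $F:[\lam]^{<\om}\to\lam$ arranged so that its outputs always lie in $(\ka,\lam)$ (collapsing small outputs to a sentinel value above $\ka$). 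Applying the Erd{\H o}s--Hajnal form of J{\'o}nssonness to $F$ yields $H\in[\lam]^\lam$ and some $\de\in\lam\setminus F[[H]^{<\om}]$ with $\de>\ka$. Let $M$ be the Skolem hull of $H\cup(\ka+1)\cup\lb P,\ka,\lam\rb$ in $N$. Then $M\prec H(\chi)$, $\ka+1\seq M$, $\lb\lam,\ka\rb\in M$, and $|M\cap\lam|\geq|H|=\lam$; moreover, every element of $M\cap\lam$ is of the form $\tau(\vec{a},\vec{\be})$ for some Skolem term $\tau$ and tuples $\vec{a}\in H^{<\om}$, $\vec{\be}\in(\ka+1)^{<\om}$, so lies in $F[[H]^{<\om}]\cup(\ka+1)$, and the chosen $\de$ witnesses $\lam\not\seq M$.

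The main obstacle is arranging clause (c), $\lam\not\seq M$, in tandem with the other three. A raw application of J{\'o}nssonness might only yield a missing ordinal below $\ka+1$, which would already be forced into $M$ by clause (d). The fix is to engineer the coloring $F$ so that its range avoids $\ka+1$ entirely, forcing the missing ordinal to lie above $\ka$. The hypothesis $\ka^+<\lam$ is used throughout to keep the family of Skolem-function-schemes small enough to absorb into a single coloring and to provide enough room above $\ka$ for the J{\'o}nsson witness.
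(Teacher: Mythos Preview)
The paper does not supply its own proof of this lemma; it merely cites Lemma~5.6 of Eisworth's handbook chapter and remarks that adding the parameter $P$ does not change the argument. So there is nothing in the paper to compare against beyond that reference, and your overall two-step plan (pull back a subalgebra for $(2)\Rightarrow(1)$; Skolem-hull a J{\'o}nsson witness for $(1)\Rightarrow(2)$) is indeed the standard route.

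Your $(2)\Rightarrow(1)$ direction is fine. In $(1)\Rightarrow(2)$, however, the device you use to secure clause~(c) does not work as written. You arrange the single coloring $F$ to have range contained in $(\ka,\lam)$ and then invoke the J{\'o}nsson partition property to obtain $H\in[\lam]^\lam$ and $\de\in\lam\setminus F[[H]^{<\om}]$ with $\de>\ka$. But once $\operatorname{ran}(F)\seq(\ka,\lam)$, the statement ``$F[[H]^{<\om}]\neq\lam$'' is satisfied trivially by $H=\lam$ and $\de=0$; the J{\'o}nsson hypothesis contributes nothing, and there is no reason any nontrivial $H$ exists, let alone one whose missing ordinal lies above $\ka$. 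A second, related gap is that you never say how the $\ka$-many schemes $g_i$ are folded into the single $F$ in such a way that $F[[H]^{<\om}]$ recovers $g_i(\vec{a})$ for \emph{every} $i<\ka$ and $\vec{a}\in H^{<\om}$; the obvious ``first coordinate as index'' coding only captures those $i$ lying in $H$.

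The usual repair is not to shrink the range of $F$ but to enlarge the algebra: adjoin to the (countably many) Skolem functions a bijection $b:\lam\to\lam\setminus(\ka+1)$ together with its inverse, and apply J{\'o}nsson to that. Any proper subalgebra $H$ closed under $b$ and $b^{-1}$ must omit some ordinal above $\ka$, since any $\gamma\notin H$ yields $b(\gamma)\in(\ka,\lam)\setminus H$. One then transports the problem through $b$ (equivalently, works with $b[H]$ in place of $H$) so that the countable-language J{\'o}nsson property really is what is being invoked, and the hull over $(\ka+1)$ stays proper. The hypothesis $\ka^+<\lam$ is exactly what makes such a bijection available.
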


Shelah (\cite{ShelahJonsson}) has proven the following remarkable theorem:

\begin{theorem} (Shelah) Suppose that $\mu$ is singular and that $(\vec{\mu},\vec{f})$ is a scale (modulo the ideal of bounded sets) of length $\mu^+$. Additionally, suppose that each $\mu_i$ carries a J{\'o}nsson algebra (i.e., is not a J{\'o}nsson cardinal). Then $\mu^+$ carries a J{\'o}nsson algebra. 
\end{theorem}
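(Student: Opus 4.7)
The plan is to argue by contradiction. Assume $\mu^+$ is a J{\'o}nsson cardinal, fix J{\'o}nsson algebras $\cal{A}_i$ on each $\mu_i$, and apply Lemma \ref{lemma:notJonsson} with $\lam = \mu^+$ and $\ka = \cf(\mu)$ (so $\ka^+ < \mu^+$, since $\mu$ is singular), using a parameter $P$ that encodes $\vec{\mu}$, $\vec{f}$, and $\la \cal{A}_i : i < \cf(\mu)\ra$. This produces an elementary submodel $M \prec \la H(\chi), \in, <_\chi, P\ra$ with $|M \cap \mu^+| = \mu^+$, $\mu^+ \not\seq M$, and $\cf(\mu) + 1 \seq M$. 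Since any bounded subset of $\mu^+$ has size at most $\mu$, we first note that $M \cap \mu^+$ must actually be unbounded in $\mu^+$.

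The crucial intermediate step is to show $\mu \not\seq M$. The idea is to exploit the $<_\chi$-wellorder: for any $\al \in M \cap [\mu, \mu^+)$, the $<_\chi$-least bijection $\pi_\al : \mu \to \al$ lies in $M$ by elementarity. If $\mu \seq M$ held, then $\pi_\al[\mu] = \al$ would be a subset of $M$, and the unboundedness of $M \cap \mu^+$ would then force $\mu^+ \seq M$, contradicting the lemma. Hence $\mu \not\seq M$, so some $\mu_{i_0} \not\seq M$, and therefore $\mu_i \not\seq M$ for every $i \geq i_0$. I expect this to be the main obstacle, since it is the one place where the hypothesis $\mu^+ \not\seq M$ gets converted into information at the level of $\mu$, which is precisely what is needed to unlock the J{\'o}nsson property at each $\mu_i$.

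With that in hand, the endgame follows the standard scale/characteristic-function pattern. By elementarity (using $\cf(\mu) + 1 \seq M$), each $\cal{A}_i$ lies in $M$ and $M \cap \mu_i$ is a subalgebra of $\cal{A}_i$; combined with $\mu_i \not\seq M$ and the J{\'o}nsson property, this forces $|M \cap \mu_i| < \mu_i$, and hence by regularity of $\mu_i$ also $\sup(M \cap \mu_i) < \mu_i$, for all $i \ge i_0$. Define $h \in \prod_{i<\cf(\mu)} \mu_i$ by $h(i) = \sup(M \cap \mu_i)$ for $i \ge i_0$ and $h(i) = 0$ otherwise. For any $\nu \in M \cap \mu^+$, elementarity gives $f_\nu \in M$, hence $f_\nu(i) \in M \cap \mu_i$ and so $f_\nu(i) \leq h(i)$ on the tail $i \ge i_0$. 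Thus every $f_\nu$ with $\nu \in M \cap \mu^+$ is bounded by $h$ modulo the bounded ideal. But $M \cap \mu^+$ is unbounded in $\mu^+$, so $\{f_\nu : \nu \in M \cap \mu^+\}$ is cofinal in $\prod_{i<\cf(\mu)} \mu_i$ modulo bounded, contradicting the uniform bound $h$ and finishing the argument.
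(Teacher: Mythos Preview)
Your proof is correct and follows the classical contradiction argument (essentially the one in Eisworth's handbook chapter). Note, however, that the paper does \emph{not} supply its own proof of this particular statement: Shelah's theorem is merely cited, and the paper instead proves the generalization Theorem~\ref{thm:specLift} (with $\spec^*(A)$ in place of a scale). So the relevant comparison is between your argument and the proof of Theorem~\ref{thm:specLift}.

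The two arguments run in opposite directions. You assume $\mu^+$ is J{\'o}nsson, obtain $M$ with $\mu^+\not\seq M$, push this down to $\mu\not\seq M$ via the bijection trick, use the J{\'o}nsson algebras at the $\mu_i$ to force $\sup(M\cap\mu_i)<\mu_i$ on a tail, and then contradict the cofinality of the scale by producing a single bound $h$. The paper's proof of Theorem~\ref{thm:specLift} instead takes an arbitrary $M$ with $|M\cap\mu^+|=\mu^+$ and shows directly that $\mu^+\seq M$: the $\spec^*$ witness $\cal{F}\in M$ gives $\cal{F}\cap M\in[\cal{F}]^{\mu^+}$, so unboundedly many coordinates $a$ have $\{f(a):f\in\cal{F}\cap M\}$ cofinal in $a$; this forces $|M\cap a|=a$, whence $a\seq M$ by the J{\'o}nsson property, and then $\mu\seq M$ and finally $\mu^+\seq M$.

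What each buys: your approach uses both the \emph{increasing} and \emph{cofinal} features of a scale (the increasing part to get cofinality from an unbounded set of indices, the cofinal part for the final contradiction), which is why it does not obviously adapt to a bare $\spec^*$ witness. The paper's argument needs only the $\spec^*$ property (every $\mu^+$-sized subfamily has unbounded $\operatorname{ub}$), which is weaker than carrying a scale; that is precisely the point of Theorem~\ref{thm:specLift}.
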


Here we show that it suffices to assume that $\mu^+$ is in the strong part of the Tukey spectrum of $A$, provided the order type of $A$ is not too high.

\begin{theorem}\label{thm:specLift}
Suppose that $A$ is a set of regular cardinals with $\operatorname{ot}(A)<\mu:=\sup(A)$ so that every $a\in A$ carries a J{\'o}nsson algebra, and suppose that $\mu^+\in\spec^*(A)$. Then $\mu^+$ carries a J{\'o}nsson algebra.
\end{theorem}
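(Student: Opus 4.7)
My plan is to invoke Lemma~\ref{lemma:notJonsson} and show $\mu^+$ is not J\'onsson by exhibiting specific $\chi$, $\ka$ with $\ka^+<\mu^+$, and parameter $P\in H(\chi)$ for which no elementary submodel $M\prec\la H(\chi),\in,<_\chi,P\ra$ simultaneously satisfies conditions (a)--(d). The parameter will encode $A$, an injectively enumerated witness $\cal{F}=\lb f_\al:\al<\mu^+\rb$ to $\mu^+\in\spec^*(A)$, and a coherent sequence of J\'onsson colorings $\vec{G}=\la G_a:a\in A\ra$, where each $G_a:[a]^{<\om}\to a$ has the property that any $H\seq a$ of size $a$ closed under $G_a$ equals $a$. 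I will take $\ka$ to be any cardinal with $\operatorname{ot}(A)\leq\ka<\mu$, which is available precisely because of the hypothesis $\operatorname{ot}(A)<\mu$.

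Assume for contradiction that some such $M$ satisfies (a), (b), (d) but fails (c), i.e., $\mu^+\not\seq M$. First, since $A\in M$ has order type $\leq\ka$ and $\ka+1\seq M$, we have $\operatorname{ot}(A)\seq M$; applying the increasing enumeration of $A$ (which lies in $M$) coordinatewise yields $A\seq M$. Next, form $\cal{F}_0:=\lb f_\al:\al\in M\cap\mu^+\rb$, which has size $\mu^+$ (by injectivity of the enumeration and $|M\cap\mu^+|=\mu^+$) and satisfies $\cal{F}_0\seq M$. By $\mu^+\in\spec^*(A)$, $\operatorname{ub}(\cal{F}_0)$ is unbounded in $\mu$. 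For each $a\in\operatorname{ub}(\cal{F}_0)$, the set $\lb f_\al(a):\al\in M\cap\mu^+\rb$ lies inside $a\cap M$ and is unbounded in $a$; regularity of $a$ yields $|a\cap M|=a$.

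The key step, which I expect to be the main obstacle, is to promote each $a\cap M$ to all of $a$. Since $a,G_a\in M$ (the latter because $\vec{G}\in M$ and $a\in M$) and $M$ is closed under finite subsets of its elements, $a\cap M$ is closed under $G_a$. Combined with $|a\cap M|=a$ and the defining property of $G_a$, this forces $a\cap M=a$, i.e., $a\seq M$. Taking the union over all $a\in\operatorname{ub}(\cal{F}_0)$ and using that this set is unbounded in $\mu$, we obtain $\mu\seq M$. Finally, for any $\be<\mu^+$, pick $\al\in M\cap\mu^+$ above $\be$ (using (b)); elementarity supplies a surjection $g:\mu\to\al$ in $M$, and since $\mu\seq M$, the range $\al$ is a subset of $M$, so $\be\in M$. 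Hence $\mu^+\seq M$, contradicting the failure of (c).

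I expect the crucial technical point to be the three-way synergy in the step $a\cap M=a$: namely (i) $a,G_a\in M$ (supplied by $\operatorname{ot}(A)\leq\ka$ and $\vec{G}$ being in the parameter), (ii) $|a\cap M|=a$ (the decisive benefit of $\spec^*$ over ordinary $\spec$, since the latter might supply only a few unbounded coordinates per $\mu^+$-subfamily and thus could not reach $\mu\seq M$), and (iii) the defining property of the J\'onsson coloring $G_a$. The rest is routine elementary-submodel bookkeeping, and the hypothesis $\operatorname{ot}(A)<\mu$ is used only to secure $A\seq M$.
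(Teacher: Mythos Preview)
Your argument is correct and follows essentially the same route as the paper: both use the elementary-submodel characterization of J{\'o}nsson cardinals, obtain $A\seq M$ from the order-type hypothesis, apply the $\spec^*$ witness to produce unboundedly many $a\in A$ with $|M\cap a|=a$, use the J{\'o}nsson algebra on each such $a$ to get $a\seq M$, and then push up to $\mu^+\seq M$. The only cosmetic difference is that you load $\cal{F}$ and the colorings $\vec{G}$ explicitly into the parameter $P$, whereas the paper takes $P=A$ and retrieves a witness $\cal{F}\in M$ by elementarity (leaving the ``$a\seq M$'' step as a one-line appeal to $a$ not being J{\'o}nsson). One exposition slip to fix: you twice write ``fails (c)'' when you mean ``satisfies (c)'' --- condition (c) \emph{is} the statement $\mu^+\not\seq M$, so your contradiction at the end is with (c) itself, not with its failure; the mathematics is unaffected.
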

\begin{proof}
We will show that $\mu^+$ carries a J{\'o}nsson algebra by showing that (2) of Lemma \ref{lemma:notJonsson} is false.

Fix a large enough regular cardinal $\chi$. Letting $\mu^+$, $|\operatorname{ot}(A)|$, and $A$ play the respective roles of $\lam$, $\ka$, and $P$ in (the negation of) Lemma \ref{lemma:notJonsson}(2), fix an arbitrary $M\prec \la H(\chi),\in,<_\chi,A\ra$ so that $\mu^+\in M$, $|M\cap\mu^+|=\mu^+$, and $|\operatorname{ot}(A)|+1\seq M$. We will show that $\mu^+\seq M$. Observe that $A\seq M$, since $M$ contains a bijection from $|\operatorname{ot}(A)|$ onto $A$, and since $|\operatorname{ot}(A)|+1\seq M$.

Applying the elementarity of $M$, we may find a set $\cal{F}\seq\prod A$ of functions witnessing $\mu^+\in\spec^*(A)$ with $\cal{F}\in M$. Using $\cal{F}$, we will show that there are unboundedly-many $a\in A$ so that $|M\cap a|=a$. The upshot of this is that for each such $a$, since $a$ is not a J{\'o}nsson cardinal, $a\seq M$. Since there are unboundedly-many such $a$, we conclude that $\sup(A)=\mu\seq M$. And finally, since $|M\cap\mu^+|=\mu^+$, we can conclude that $\mu^+\seq M$.

To show the existence of unboundedly-many such $a$, let $\cal{F}_M:=\cal{F}\cap M$. Since $|M\cap\mu^+|=\mu^+$, we know that $\cal{F}_M\in[\cal{F}]^{\mu^+}$. Since $\mu^+\in\spec^*(A)$, the set $\operatorname{ub}(\cal{F}_M)$ is unbounded in $A$.

Now let $a\in\operatorname{ub}(\cal{F}_M)$. Then for all $f\in\cal{F}_M$, $f(a)\in M$, since $f$ and $a$ are each members of $M$. Since $\lb f(a):f\in\cal{F}_M\rb$ is unbounded in $a$ (by definition of $a$ being an unbounded coordinate) and a subset of $M$, we conclude that $M\cap a$ has size $a$. This completes the proof.
\end{proof}

We close this section by providing a bound on the strong part of the Tukey spectrum. First note that it follows almost immediately from the definitions that 
$$
\sup(\spec(A))\leq\cf(\prod A,<).
$$
Now let $J_{\text{bd}}$ denote the ideal of bounded sets on $A$. Note that $\prod A/J_{\text{bd}}$ does have a cofinality, but it needn't have a \emph{true} cofinality (i.e., a linearly-ordered, cofinal subset).

\begin{proposition}
Let $A$ be a set of regular cardinals, and let $\lam\in\spec(A)$. Then either $\lam\leq\cf(\prod A/J_{\text{bd}})$ or $\lam\in\spec(\bar{A})$ for some proper initial segment $\bar{A}$ of $A$.

In particular (see Definition \ref{def:strongpart}), if $\lam$ is in $\spec^*(A)$, then $\lam\leq\cf(\prod A/J_{\text{bd}})$.
\end{proposition}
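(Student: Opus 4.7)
The plan is to assume $\lam > \mu := \cf(\prod A/J_{\text{bd}})$ and produce a proper initial segment $\bar A$ of $A$ with $\lam \in \spec(\bar A)$. If $A$ has a maximum $\bar a$, then $J_{\text{bd}}$ collapses all coordinates except $\bar a$, so $\mu = \bar a$; Lemma \ref{lemma:specproduct} together with $\spec(\lb\bar a\rb) = \lb\bar a\rb$ then forces $\lam = \bar a \le \mu$ or $\lam \in \spec(A \setminus \lb\bar a\rb)$, the latter being a proper initial segment. So I henceforth assume $A$ has no max.

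Fix a witness $\cal{F}$ of size $\lam$ and a cofinal family $\la g_\alpha : \alpha < \mu\ra$ in $\prod A$ modulo $J_{\text{bd}}$. For each $f \in \cal{F}$, pick $\alpha(f) < \mu$ with $f \le g_{\alpha(f)}$ mod $J_{\text{bd}}$ and an element $a(f) \in A$ strictly above the exception set $\lb a : f(a) \ge g_{\alpha(f)}(a)\rb$. Pigeonhole (using $\lam > \mu$ regular) yields $\cal{F}^* \in [\cal{F}]^\lam$ on which $\alpha(f) \equiv \alpha^*$. The central dichotomy is whether some $a^* \in A$ satisfies $|\lb f \in \cal{F}^* : a(f) \le a^*\rb| = \lam$. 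If so, I would set $\bar A := A \cap (a^*+1)$ (a proper initial segment since $A$ has no max) and verify $\cal{F}^* \res \bar A$ witnesses $\lam \in \spec(\bar A)$ by a gluing argument: any $\lam$ many $f \in \cal{F}^*$ agreeing on $\bar A$ (or whose restrictions are jointly bounded on $\bar A$) would extend via $g_{\alpha^*}$ on the tail to a $\lam$-sized subset of $\cal{F}$ bounded in $\prod A$, contradicting $\cal{F}$'s witness property.

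If no such $a^*$ exists, then $\cal{F}^* = \bigcup_{a^* \in A}\lb f \in \cal{F}^* : a(f) \le a^*\rb$ is a monotone union of $<\lam$-sized pieces, so regularity of $\lam$ forces $\cf(\sup A) \ge \lam > \mu$. A direct diagonalization then shows $\cf(\prod A/J_{\text{bd}}) \ge \cf(\sup A)$, contradicting $\lam > \mu$: fixing a cofinal sequence $\la a_\eta\ra$ in $A$, at every $a_\eta > \mu$ (cofinally many) the regularity of $a_\eta$ ensures $\sup_{\alpha < \mu} g_\alpha(a_\eta) < a_\eta$, so setting $f(a_\eta) := \sup_\alpha g_\alpha(a_\eta) + 1$ (and $0$ elsewhere) defines an $f \in \prod A$ beating each $g_\alpha$ cofinally in $A$, hence not dominated modulo $J_{\text{bd}}$. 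For the ``in particular'' statement, $\lam \in \spec^*(A)$ rules out the first branch of the dichotomy outright: the subfamily $\cal{F}^*_{a^*}$ of size $\lam$ would have $\operatorname{ub}$ contained in $A \cap (a^*+1)$, bounded in $\sup A$, contradicting $\spec^*$. Thus one is forced into the no-$a^*$ branch, which directly yields $\lam \le \mu$. The main obstacle is the diagonalization step, since it relies essentially on the regularity of elements of $A$ to pull a supremum of $<\lam$ many ordinals strictly below each $a_\eta$, and must be arranged uniformly along the cofinal sequence regardless of whether $\sup A$ is regular or singular.
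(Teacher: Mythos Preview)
Your argument is correct and follows essentially the same route as the paper: pigeonhole to a single dominating function $g_{\alpha^*}$, then freeze the tail to push the witness down to a proper initial segment. The paper does exactly this, but in a terser form; in particular, the paper simply writes ``freeze out the tail: let $Y\in[X]^\lam$ and $\bar{a}\in A$\dots'' without justifying why such $Y,\bar a$ exist, whereas you split into a dichotomy on whether some $a^*$ catches $\lam$-many $f$.

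One remark worth making: your ``no-$a^*$'' branch is in fact vacuous, and the paper is (tacitly) relying on this. Your own diagonalization shows, with no reference to the ``no-$a^*$'' hypothesis, that whenever $A$ has no maximum and $\mu<\sup(A)$, the family $\la g_\alpha:\alpha<\mu\ra$ cannot be cofinal mod $J_{\text{bd}}$; hence always $\mu\geq\sup(A)$. Combined with your separate treatment of the case $\max(A)$ exists (where $\mu=\max A$), this gives $|A|\leq\sup(A)\leq\mu<\lam$ outright, so the map $f\mapsto a(f)$ has range of size $<\lam$ and pigeonhole immediately produces the desired $a^*$. In other words, you could have stated $\cf(\prod A/J_{\text{bd}})\geq\sup(A)$ as a preliminary observation and skipped the dichotomy entirely; the paper's ``freeze out the tail'' step is justified precisely by this. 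Your handling of the ``in particular'' clause is the same as the paper's: any $\cal{F}_0$ obtained this way has $\operatorname{ub}(\cal{F}_0)$ bounded, so no $\cal{F}$ can witness $\lam\in\spec^*(A)$.
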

\begin{proof}
Let $\mu:=\cf(\prod A/J_{\text{bd}})$, and let $\la f_\al:\al<\mu\ra$ enumerate a set of functions which is cofinal (but not necessarily increasing) in $\prod A/J_{\text{bd}}$. Suppose that $\lam>\mu$. Let $\la h_\xi:\xi<\lam\ra$ enumerate a set $\cal{F}$ of $\lam$-many functions witnessing that $\lam\in\spec(A)$. Since $\lam$ is regular and $\lam>\mu$, fix $\al<\mu$ and $X\in[\lam]^\lam$ so that for all $\xi\in X$, $h_\xi<_{J_{\text{bd}}}f_\al$. Now freeze out the tail: let $Y\in [X]^\lam$ and $\bar{a}\in A$ so that for all $\xi\in Y$ and all $a\in A\bsl\bar{A}$,
$$
h_\xi(a)<f_\al(a).
$$
Thus $\cal{F}_0:=\lb h_\xi:\xi\in Y\rb$ is bounded on every coordinate in $A\bsl\bar{a}$. Since $\cal{F}_0\in[\cal{F}]^\lam$, $\cal{F}_0$ is unbounded in $(\prod A,<)$. From this, one can argue that $\lb f_\xi\res (A\cap\bar{a}):\xi\in Y\rb$ has size $\lam$ and witnesses that $\lam\in\spec(A\cap\bar{a})$.

For the ``in particular" part of the proposition, note that if $\lam>\cf(\prod A/J_{\text{bd}})$, then the previous argument shows that there is an $\cal{F}_0\in[\cal{F}]^\lam$ so that $\operatorname{ub}(\cal{F}_0)$ is bounded in $A$.
\end{proof}

\section{Questions}

Here we record a few questions which we find interesting. The first question restates Question \ref{q:theQ}, the main one driving this line of research:

\begin{question}\label{Q:main}
 Does $\zfc$ prove that for all sets $A$ of regular cardinals, $\pcf(A)=\spec(A)$?
\end{question}

A restricted version of Question \ref{Q:main}, to be read in light of Theorem \ref{thm:GM}, is this:

\begin{question}
Does $\zfc$ prove that for all \emph{progressive} sets $A$ of regular cardinals, $\pcf(A)=\spec(A)$?
\end{question}

One can also ask about the relationship between $\spec^*(A)$ and $\pcf(A)$, as in the next two questions:

\begin{question}
Does $\zfc$ prove that $\spec^*(A)\seq\pcf(A)$?
\end{question}

\begin{question}
Does $\zfc$ prove that if $A$ is a set of regular cardinals without a max, then $\spec(A)\bsl\sup(A)\seq\spec^*(A)$?
\end{question}

The following question should be read in light of Theorem \ref{thm:limpcf}:

\begin{question}
Does $\zfc$ prove that for all sets $A$ of regular cardinals, $$
\spec(A)\seq\pcf(A)\cup\lim(\pcf(A))?
$$
\end{question}

The next question connects to Theorem \ref{thm:notspec} and Corollary \ref{cor:notspec}:

\begin{question}
Is a weakly compact necessary to get a model in which $\ka$ is a regular limit, $A\seq\ka$ is unbounded and non-stationary, and $\ka\notin\spec(A)$?
\end{question}

Theorem \ref{thm:specLift} showed that the strong part of the Tukey spectrum can be used in place of PCF-theoretic scales to lift the property of not being a J{\'o}nsson cardinal. Where else, if at all, can $\spec^*(A)$ be used in this way? In particular, we ask whether the strong part of the Tukey spectrum is enough to generalize a classic result of Todorcevic (\cite{TodorcevicPartitioning}; see the treatments in \cite{BurkeMagidor} and \cite{EisworthHB}).

\begin{question}
Suppose that $A$ is a set of regular cardinals cofinal in a singular $\mu$ so that every $\ka\in A$ fails to satisfy $\ka\to [\ka]^2_\ka$. Suppose that $\mu^+\in\spec^*(A)$. Does this imply that $\mu^+$ fails to satisfy $\mu^+\to[\mu^+]^2_{\mu^+}$?
\end{question}

\bibliographystyle{plain}
\bibliography{Gilton_SpecPCF}

\end{document}